\DeclareMathOperator{\reg}{reg}
\DeclareMathOperator{\lk}{lk}
\DeclareMathOperator{\rank}{rank}
\DeclareMathOperator{\st}{st}
\DeclareMathOperator{\supp}{supp}
\DeclareMathOperator{\lcm}{lcm}
\theoremstyle{plain}
\newtheorem{theorem}{Theorem}[section]
\newtheorem{proposition}[theorem]{Proposition}
\newtheorem{lemma}[theorem]{Lemma}
\newtheorem*{lemma*}{Auxiliary Lemma}
\newtheorem{corollary}[theorem]{Corollary}
\newtheorem{problem}[theorem]{Problem}
\theoremstyle{definition}
\newtheorem*{acknowledgement}{Acknowledgement}
\title{Regularity via topology of the lcm-lattice for $C_4$-free graphs}
\author{Eran Nevo\footnote{Department of Mathematics, Cornell
University, Ithaca USA, E-mail address:
eranevo@math.cornell.edu. Research partially supported by an NSF Award DMS-0757828.}}
\begin{document}
\maketitle
\begin{abstract}
We study the topology of the lcm-lattice of edge ideals and derive upper bounds on the Castelnuovo-Mumford regularity of the ideals. In this context it is natural to restrict to the family of graphs with no induced $4$-cycle in their complement. Using the above method we obtain sharp upper bounds on the regularity when the complement is a chordal graph, or a cycle, or when the primal graph is claw free with no induced $4$-cycle in its complement. For the later family we show that the second power of the edge ideal has a linear resolution.
\end{abstract}
\section{Preliminaries}\label{sec:pre}
Fix a field $k$ and let $I$ be a monomial ideal in the polynomial ring $S=k[x_1,...,x_n]$ with a minimal set of generators $G(I)=\{m_1,...,m_d\}$. Let $L(I)$ be the lcm-lattice of $I$, i.e. the poset whose elements are labeled by the least common multiples of subsets of monomials in $G(I)$ ordered by divisibility. Indeed $L(I)$ is a lattice, its minimum is $1$ (corresponding to the emptyset) and its maximum is $m_I=\lcm(m:\ m\in G(I))$.

The minimal free resolution of $I$ over $S$ is $\mathbb{N}^n$-graded. Denote the corresponding multi-graded Betti numbers by $\beta_{i,m}$
for a monomial $m$ and $i\geq 0$ an integer.
The main result in \cite{Gasharov-Peeva-Welker} shows how to compute the Betti numbers from the reduced homology of the order complex of open intervals in $L(I)$.

\begin{theorem}\cite[Theorem 2.1]{Gasharov-Peeva-Welker}\label{thm:GPW}
Let $\Delta((1,m))$ denote the order complex of the open interval $(1,m)$ in $L(I)$.
For every $i\geq 0$ and $m\in L(I)$ one has
$$\beta_{i,m}=\dim_k \tilde{H}_{i-1}(\Delta((1,m));k).$$
If $m\notin L(I)$ then $\beta_{i,m}=0$ for every $i$.
\end{theorem}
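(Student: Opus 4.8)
The plan is to compute the multigraded Betti numbers directly from the Koszul complex and then identify the resulting combinatorial object with the order complex via two classical homotopy-theoretic tools, the Nerve Lemma and the Crosscut Theorem. First I would pass to Koszul homology. Since the Koszul complex $K_\bullet=K_\bullet(x_1,\dots,x_n;S)$ is a free resolution of $k$, we have $\beta_{i,m}=\dim_k\Tor_i^S(I,k)_m=\dim_k H_i(I\otimes_S K_\bullet)_m$. Writing out the multidegree-$m$ strand, a $k$-basis of $(I\otimes K_p)_m$ is indexed by the squarefree monomials $x_T$ with $T\subseteq\supp(m)$, $|T|=p$, and $m/x_T\in I$, and the Koszul differential is exactly the simplicial boundary. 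Hence $(I\otimes K_\bullet)_m$ is the augmented chain complex of the upper Koszul simplicial complex
$$\Delta^m=\{T\subseteq\supp(m):\ m/x_T\in I\},$$
which is genuinely a simplicial complex because $m/x_{T'}$ is a multiple of $m/x_T$ for $T'\subseteq T$. This yields $\beta_{i,m}=\dim_k\tilde H_{i-1}(\Delta^m;k)$, reducing the theorem to a homotopy identification of $\Delta^m$.

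Next I would dispose of the case $m\notin L(I)$. Here $m\neq\lcm(G_m)$, where $G_m=\{m_j\in G(I):\ m_j\mid m\}$, so some variable $x_i$ has strictly larger exponent in $m$ than in every generator dividing $m$. A short divisibility check shows that $i$ is then a cone point of $\Delta^m$, so $\Delta^m$ is contractible and all $\beta_{i,m}$ vanish, as claimed.

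The main step is the case $m\in L(I)$, where I would bring in the lcm-lattice. Let $\Gamma_m$ be the crosscut complex of the interval $[1,m]$ with respect to its atoms: its vertices are the generators in $G_m$, and $\sigma\subseteq G_m$ is a face iff $\lcm(\sigma)\neq m$. Setting $A_i=\{m_j\in G_m:\ \exp_i(m_j)<\exp_i(m)\}$ for $i\in\supp(m)$, one checks that $\sigma\in\Gamma_m$ iff $\sigma\subseteq A_i$ for some $i$, so the full simplices $\{\overline{A_i}\}_{i\in\supp(m)}$ cover $\Gamma_m$ by contractible subcomplexes whose nonempty intersections $\overline{A_{i_1}\cap\cdots\cap A_{i_r}}$ are again simplices. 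The crucial observation is that the nerve of this cover is precisely $\Delta^m$: a set $T\subseteq\supp(m)$ has $\bigcap_{i\in T}A_i\neq\emptyset$ iff some $m_j\in G_m$ satisfies $x_T\mid(m/m_j)$, which is exactly the condition $m/x_T\in I$. The Nerve Lemma therefore gives $\Delta^m\simeq\Gamma_m$, and the Crosscut Theorem gives $\Gamma_m\simeq\Delta((1,m))$; combining them yields
$$\tilde H_{i-1}(\Delta^m;k)\cong\tilde H_{i-1}(\Delta((1,m));k),$$
and hence the stated formula.

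I expect the main obstacle to be the bookkeeping that makes the Nerve Lemma applicable, namely verifying that the cover is by contractible subcomplexes with contractible (or empty) intersections and that its nerve is literally $\Delta^m$, together with keeping the reduced-homology degree shifts and the empty-face conventions consistent across the three complexes $\Delta^m$, $\Gamma_m$, and $\Delta((1,m))$.
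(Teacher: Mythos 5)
The paper offers no proof of this statement---it is imported verbatim from \cite{Gasharov-Peeva-Welker}---so there is no internal argument to compare yours against; I can only judge your proof on its own merits, and it is correct. Your route is the standard modern derivation (essentially the one in Miller--Sturmfels' book): the multidegree-$m$ strand of $I\otimes_S K_\bullet$ is the augmented chain complex of the upper Koszul complex $\Delta^m$, giving $\beta_{i,m}=\dim_k\tilde H_{i-1}(\Delta^m;k)$; the cone-point argument correctly disposes of $m\notin L(I)$ (including the trivial subcase where no generator divides $m$, so $\Delta^m$ is void and all strands vanish); and the chain $\Delta^m\simeq\Gamma_m\simeq\Delta((1,m))$ is sound, since the atoms of $[1,m]$ are exactly the generators dividing $m$ (minimality of $G(I)$), the sets $\overline{A_i}$ do cover the crosscut complex with simplicial, hence contractible, nonempty intersections, and your nerve computation ($\bigcap_{i\in T}A_i\neq\emptyset$ exactly when $m/x_T\in I$) is right. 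The one genuine loose end is the degenerate case $m\in G(I)$: there every $A_i$ is empty, $(1,m)$, $\Gamma_m$ and $\Delta^m$ all reduce to the empty complex, and the Nerve and Crosscut theorems as homotopy statements do not literally apply; you should verify $\beta_{0,m}=1=\dim_k\tilde H_{-1}$ by hand in that case. You flag this under ``empty-face conventions,'' and it is routine. What your argument buys over the paper's bare citation is a self-contained proof from the Koszul complex; whether it coincides with the original argument of \cite{Gasharov-Peeva-Welker} is a question about that paper, not this one.
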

(In \cite{Gasharov-Peeva-Welker} $S/I$, rather than $I$, was resolved, hence the shift in the index.)
It follows that the Castelnuovo-Mumford regularity of $I$ is
\begin{equation}\label{eq:reg}
\reg(I)=\sup_{i\geq 0}(\max \{j: \exists m\in L(I),\ \deg(m)=i+j,\
\tilde{H}_{i-1}(\Delta((1,m));k)\neq 0 \}).
\end{equation}

Further work on $L(I)$ appeared in \cite{Phan}.
For unexplained terminology on posets, simplicial complexes and topology we refer to Bj\"{o}rner \cite{Bjorner:TopologicalMethods}.

For a graph $G=(V,E)$ let $I(G)$ be its \emph{edge ideal}, namely $I(G)=(x_ix_j:\ \{i,j\}\in E(G))$. This is the case where $G(I)$ consist of squarefree monomials of degree $2$. Denote $m_G=m_{I(G)}$ in this case.
In this paper we consider edge ideals.  These got much attention in recent years, from both algebraists and combinatorialists. E.g., in the recent papers \cite{Engstrom-Dochtermann, Ramos-Gimenez, Woodroofe}
algebraic properties of certain edge ideals are derived from the topology of the clique complex of the complementary graph.
We study the topology of the lcm-lattice of the edge ideal (of the original graph) and its powers, which in turn implies upper bounds on their regularity.

Let $G^c$ be the complement of $G$, namely $G^c=(V, \binom{V}{2}-E)$. When considering $L(I(G))$ it is natural to assume that $G^c$ has no induced $4$-cycles, as is explained in Section \ref{sec:WhyNotC4}, so we restrict our attention to this class of graphs and some subclasses of it.

In Section \ref{sec:chordal} we consider chordal graphs and in Section \ref{sec:cycles} we consider cycles. From our results on the lcm-lattice of their complement we derive a new proof of Fr\"{o}berg's theorem, that $I(G)$ has a linear resolution iff $G^c$ is chordal. Moreover, the main result in \cite{Ramos-Gimenez} also easily follows.
Further, the relation between the homology of the lcm-lattice and the homology of the clique complex of the complementary graph is explained.

It was suggested by Francisco, H\`{a} and Van Tuyl \cite{FHvT} that if $G^c$ has no induced $4$-cycles then for any $k\geq 2$, $I(G)^k$ has a linear resolution.
While this is not true (see \cite{Nevo-Peeva} for examples), it may be true for the subfamily where in addition $G$ is \emph{claw free}, i.e. has no induced bipartite subgraph with one vertex on one side and $3$ vertices on the other.
Note that this family contains all graphs $G$ such that $G^c$ has no induced $3-$ nor $4-$ cycles.
\begin{theorem}\label{thm:lin.res.clow.square}
Let $G$ be claw free such that $G^c$ has no induced $4$-cycle. Then:

(1) $I(G)^2$ has a linear resolution.

(2) If $G^c$ is not chordal then $\reg(I(G))=3$.
\end{theorem}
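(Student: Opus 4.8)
The plan is to read both statements off the topology of order complexes of open intervals in the relevant lcm-lattice, via Theorem~\ref{thm:GPW} and the regularity formula \eqref{eq:reg}, and then to identify these complexes with clique complexes of the complementary graph. Throughout I use that both hypotheses are inherited by induced subgraphs and that $\mathrm{Ind}(G[W])$, the independence complex of $G[W]$, is the clique complex of $(G^c)[W]$. Recall that claw freeness says every neighborhood $N_G(v)$ contains no independent triple---equivalently, every triangle of $G^c$ is a dominating set of $G^c$---while the second hypothesis says $G^c$ has no induced $4$-cycle. I would treat part (2) first, since it fixes the machinery, and then part (1).

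For part (2) the lower bound is immediate: as $G^c$ is not chordal, Fröberg's theorem (reproved in Section~\ref{sec:chordal}) shows $I(G)$ has no linear resolution, so $\reg(I(G))\ge 3$. For the matching upper bound $\reg(I(G))\le 3$, combining \eqref{eq:reg} with the identification of the homology of $\Delta((1,m))$ (for $m\in L(I(G))$ with $\supp(m)=W$) with that of $\mathrm{Ind}(G[W])$ reduces everything to the purely topological claim that $\tilde H_\ell(\mathrm{Ind}(G);k)=0$ for all $\ell\ge 2$ and every $G$ of our type; since induced subgraphs are again of this type, proving this universally quantified statement by induction on $|V(G)|$ covers all $W$ at once. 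Writing $G-v$ and $G-N_G[v]$ for the induced subgraphs on $V\setminus\{v\}$ and $V\setminus N_G[v]$, the standard decomposition $\mathrm{Ind}(G)=\mathrm{Ind}(G-v)\cup \st(v)$, with $\st(v)=v*\mathrm{Ind}(G-N_G[v])$ a cone and intersection the link $\mathrm{Ind}(G-N_G[v])$, yields the Mayer--Vietoris sequence
\[
\tilde H_\ell(\mathrm{Ind}(G-v)) \to \tilde H_\ell(\mathrm{Ind}(G)) \to \tilde H_{\ell-1}(\mathrm{Ind}(G-N_G[v])) \to \tilde H_{\ell-1}(\mathrm{Ind}(G-v)).
\]
For $\ell\ge 3$ both flanking groups vanish by the inductive hypothesis applied to the two smaller graphs, forcing $\tilde H_\ell(\mathrm{Ind}(G))=0$.

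The main obstacle in part (2) is the borderline case $\ell=2$, where the sequence gives only $\tilde H_2(\mathrm{Ind}(G))\cong \ker\bigl(\tilde H_1(\mathrm{Ind}(G-N_G[v]))\to \tilde H_1(\mathrm{Ind}(G-v))\bigr)$, so I must choose $v$ so that this inclusion-induced map is injective---that is, so that no independent $1$-cycle avoiding $N_G[v]$ becomes a boundary only after the vertices of $N_G(v)$ are adjoined. This is exactly where the two hypotheses must be used together: claw freeness forces the independence number of $N_G(v)$ to be at most $2$, sharply limiting how its vertices can fill cycles, while the absence of an induced $C_4$ in $G^c$ controls the empty squares in $\mathrm{Ind}(G)$ through which such fillings would pass. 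I expect the right choice is a vertex $v$ extremal for a suitable order (e.g. of maximum degree, or lying in a dominating triangle of $G^c$), after which the claim becomes a finite local verification on the link.

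For part (1) the same framework applies to $I(G)^2$, which is generated in degree $4$, so a linear resolution is equivalent to $\reg(I(G)^2)=4$; as the inequality $\reg(I(G)^2)\ge 4$ is automatic, \eqref{eq:reg} reduces the problem to showing $\tilde H_{i-1}(\Delta((1,m));k)=0$ whenever $m\in L(I(G)^2)$ satisfies $\deg(m)\ge i+5$. This is a high-connectivity statement: each open interval of the coarser lattice $L(I(G)^2)$ (whose elements now carry exponents up to $2$ and whose atoms are products of two edges) must have its order complex $(\deg(m)-6)$-connected. I would stratify the elements below a fixed top monomial $m$ by the edge-pair configurations supported on $\supp(m)$, and then build the required connectivity by exhibiting a cone point or an acyclic matching, invoking claw freeness to supply the missing edge that fills any low-dimensional cycle an independent triple would otherwise create. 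I regard this analysis of $L(I(G)^2)$ as the hardest part of the theorem, since the interval no longer collapses to the clique complex of a single graph and the connectivity bound must hold uniformly over all top monomials $m$.
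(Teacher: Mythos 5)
Your reduction of part (2) to the vanishing $\tilde H_\ell(\mathrm{Ind}(G[W]);k)=0$ for all $\ell\ge 2$ and all $W\subseteq V(G)$ is the same reduction the paper makes via Hochster's formula, and your Mayer--Vietoris induction does dispose of $\ell\ge 3$. But the entire content of the upper bound sits in the case $\ell=2$, and there your argument stops at an unproven claim: that some vertex $v$ can be chosen so that the inclusion-induced map $\tilde H_1(\mathrm{Ind}(G-N_G[v]))\to \tilde H_1(\mathrm{Ind}(G-v))$ is injective. You offer no criterion that you actually verify, only the expectation that an extremal choice of $v$ will work, so this is a genuine gap rather than a routine omission. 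The paper avoids the single-vertex decomposition altogether: it deletes an entire $2$-face $F=\{a,b,c\}$ of the clique complex of $G^c[W]$, decomposes $|\Delta[W]|$ as $(|\Delta[W]|-|F|)\cup\bigcup_{v\in F}\st(v)$, and proves the key combinatorial lemma that the intersection complex $M$ coincides with $\Delta[W-F]$ on all faces of cardinality at least $3$ --- this is precisely where claw freeness and the absence of an induced $C_4$ enter, via the $4$-cycle $(a,b,u,w)$ argument --- from which injectivity of the connecting map follows in all relevant dimensions at once through a commutative square comparing it with the inclusion $M\hookrightarrow\Delta[W-F]$. Without an analogous lemma your $\ell=2$ case does not close.

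For part (1) the situation is more serious: beyond the correct observation that linearity of the resolution of $I(G)^2$ amounts to the vanishing of $\tilde H_{i-1}(\Delta((1,m)))$ for $m\in L(I(G)^2)$ with $\deg(m)\ge i+5$, your proposal contains no argument, only the stated intention to stratify by edge-pair configurations and find cone points or acyclic matchings. The paper's proof of this part is the bulk of Section \ref{sec:claw} and rests on several nontrivial ingredients with no counterpart in your sketch: the case split according to whether $m$ equals the join $m_{sf}$ of its squarefree atoms; the reduction of the squarefree case to $\reg(I(G))\le 3$ via Proposition \ref{prop:genPhan}, which shows that passing to the lattice generated in one degree higher drops $\alpha$ by one; Lemma \ref{lem:NoPureSquare}, which eliminates the square atoms $(ab)^2$; and, in the non-squarefree case, a filtration $P_0\subseteq\cdots\subseteq P_{\deg(m)-4}$ by degree from the top together with an explicit prism construction (Lemma \ref{lem:forPrism}) showing that the codimension-$2$ spheres created at stage $\deg(m)-6$ become boundaries at the next stage. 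None of this is forced by your setup, and I do not see how to complete your plan without reconstructing essentially these steps.
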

This seems to be the first infinite family of graphs such that while $I(G)$ does not have a linear resolution, a higher power of it ($I(G)^2$ in this case) does.
Theorem \ref{thm:lin.res.clow.square}
is proved in Section \ref{sec:claw}.

\section{Why not $C_4$?}\label{sec:WhyNotC4}
Let $C_4$ denote 4-cycle.
Recall that a poset $P$ is \emph{pure} if all its maximal chains have the same finite length.
It is shown in \cite[Theorem 2.2]{Nevo-Peeva} that
\begin{proposition}\label{prop:gradedLCM}
If $G^c$ has no induced $C_4$ then for any $k\geq 1$ the lcm-lattice $L(I(G)^k)$ is \emph{pure}, and except for the minimum, the rank function is given by $\rank(m)=\deg(m)-2k+1$.
\end{proposition}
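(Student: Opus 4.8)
The plan is to reduce the statement to a single local claim about covering relations and then to prove that claim by exploiting the hypothesis in its equivalent form, namely that $G$ contains no induced $2K_2$ (two disjoint edges spanning no further edge), since the complement of $C_4$ is exactly $2K_2$. Every minimal generator of $I(G)^k$ is a product of $k$ edge-monomials and hence has degree exactly $2k$; consequently the atoms of $L:=L(I(G)^k)$ are precisely these generators, all of degree $2k$, so $\rank(m)=1=\deg(m)-2k+1$ already holds on atoms. Setting $\rho(m)=\deg(m)-2k+1$ for $m\ne \hat 0$ and $\rho(\hat 0)=0$, it then suffices to show that $\rho$ rises by exactly $1$ along every covering relation: a finite bounded poset carrying such a function is graded, hence pure, with $\rho$ as its rank function. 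As $\rho$ rises by $1$ from $\hat 0$ to each atom, the whole proposition reduces to the claim that if $x\lessdot y$ with $x\ne\hat 0$ then $\deg(y)=\deg(x)+1$; equivalently, if $x<y$ lie above $\hat0$ with $\deg(y)\ge \deg(x)+2$, then some $z\in L$ satisfies $x<z<y$.

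I would first carry this out for $k=1$, where the role of the hypothesis is already visible. Here the nonminimal elements of $L$ are the squarefree monomials $\prod_{v\in W}x_v$ with $G[W]$ having no isolated vertex, and $\deg=|W|$. Given such $W\subsetneq W'$ with $|W'\setminus W|\ge 2$, either some $u\in W'\setminus W$ has a neighbour in $W$, and then $W\cup\{u\}$ is a strictly intermediate element; or there is no edge between $W$ and $W'\setminus W$, in which case an edge inside $W$ (which exists since $G[W]$ has no isolated vertex) together with an edge inside $W'\setminus W$ forms an induced $2K_2$, contradicting the hypothesis. Thus every cover has $|W'\setminus W|=1$, as required.

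For general $k$ I would argue along the same lines, packaging the argument so that the multiplicities do not obscure it. Write $A$ and $B$ for the sets of generators dividing $x$ and $y$, so that $x=\lcm(A)$, $y=\lcm(B)$, $A\subsetneq B$, and let $D=\{v:\ \deg_v(y)>\deg_v(x)\}$ be the difference support, where $\deg_v(m)$ denotes the exponent of $x_v$ in $m$. The key observation is that $x\lessdot y$ forces a rigid dichotomy: for $g\in B\setminus A$ the join $\lcm(x,g)$ lies strictly above $x$ and below or equal to $y$, so it must equal $y$ lest it be an intermediate element, which means every such $g$ saturates $y$ on all of $D$, while every $g\in A$ saturates none of $D$. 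Hence it suffices to exhibit a single \emph{partial saturator}: a generator $g^{*}$ dividing $y$ that attains $\deg_v(y)$ on some but not all coordinates of $D$ (or, when $D=\{v\}$ with $\deg_v(y)\ge \deg_v(x)+2$, attains an intermediate value $\deg_v(x)<\deg_v(g^{*})<\deg_v(y)$). For then $\lcm(x,g^{*})$ is strictly between $x$ and $y$, contradicting the cover, so a gap of at least $2$ is impossible.

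To build $g^{*}$ I would start from a full saturator $g\in B\setminus A$ and perform an edge swap: choose $v\in D$, delete an edge of $g$ incident to $v$ (lowering $\deg_v$ below $\deg_v(y)$), and re-insert one edge inside the \emph{deficiency region} $R=\{w:\ \deg_w(g)<\deg_w(x)\}$ where $g$ falls short of $x$. Because $g$ spends its whole degree saturating $D$, all available room sits in $R$, and $R$ is nonempty precisely because the gap is at least $2$; an edge inserted there keeps $g^{*}$ dividing $y$, lowers exactly one coordinate of $D$, and leaves the others saturated, yielding the required partial saturator. The main obstacle is exactly the existence of this swap edge: the swap can fail only if the relevant vertices span no edge, and I would show that such a failure exhibits two disjoint edges of $G$ with no connecting edge — the deleted edge at $v$ and an edge realizing $x$ inside $R$ — that is, an induced $2K_2$, contradicting the hypothesis. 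Making this last step uniform in $k$ is the delicate point: one must choose $v$ and the two edges so that no cross edge can exist, and separately treat the degenerate case in which the entire gap is concentrated in the multiplicity of a single coordinate (and the base case $k=1$, where the generators are single edges and the swap degenerates to the argument above).
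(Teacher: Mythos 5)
First, a point of comparison: the paper does not actually prove this proposition --- it is quoted from \cite[Theorem 2.2]{Nevo-Peeva} --- so there is no in-paper argument to measure yours against, and your proposal must stand on its own. Your reduction is sound: every minimal generator of $I(G)^k$ has degree $2k$, so the atoms all have $\rho(m)=\deg(m)-2k+1$ equal to $1$, and a finite bounded poset carrying a function that increases by exactly $1$ along every covering relation is graded with that function as rank. Your $k=1$ argument is complete and correct: between $W\subsetneq W'$ either some $u\in W'\setminus W$ has a neighbour in $W$, giving the intermediate element $W\cup\{u\}$, or an edge inside $W$ and an edge inside $W'\setminus W$ form an induced $2K_2$, i.e.\ an induced $C_4$ in $G^c$. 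The dichotomy for general $k$ is also correct: if $x\lessdot y$ with $\deg(y)\ge\deg(x)+2$, then every generator $g$ dividing $y$ but not $x$ must satisfy $\lcm(x,g)=y$ and hence saturate $y$ on all of $D$, so a single ``partial saturator'' $g^*$ would finish the proof.

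The genuine gap is the construction of $g^*$, which is exactly where the hypothesis on $G$ has to enter for $k\ge 2$, and which you yourself flag as ``the delicate point.'' The recipe ``delete an edge of $g$ at $v\in D$ and re-insert an edge inside $R=\{w:\deg_w(g)<\deg_w(x)\}$'' does not work as stated. The total deficiency $\sum_{w\in R}(\deg_w(x)-\deg_w(g))$ equals $\deg(y)-2k\ge 2$, but it can be concentrated at a single vertex, in which case there is no edge of $G$ with both endpoints in $R$; an inserted edge with only one endpoint in $R$ can push $g^*$ above $y$ at its other endpoint, destroying $g^*\mid y$; if the deleted edge $\{v,u\}$ has $u\in D$ as well, the resulting $g^*$ can end up dividing $x$, so that $\lcm(x,g^*)=x$ rather than a strictly intermediate element; and, most importantly, the claim that the simultaneous failure of \emph{all} admissible swaps forces two disjoint edges of $G$ with no cross edge is asserted but never argued --- the two candidate edges you name (the deleted edge at $v$ and an edge ``realizing $x$ inside $R$'') need not be disjoint, the latter need not exist, and nothing in the sketch excludes cross edges between them. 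Since this is precisely the step at which $C_4$-freeness of $G^c$ must be exploited for $k\ge 2$, the proof is incomplete at its essential point, even though the overall strategy is plausible and the $k=1$ case is fully established.
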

This makes tools from graded poset topology applicable.
In this situation any interval $[x,y]$ in $L(I(G)^k)$ where $x\neq 1$ is a semimodular lattice, and hence shellable \cite[Theorem 3.1]{Bjorner-shellableCMposets}, a fact which we will use in the sequel to derive information on the regularity of $I(G)^k$.

If $G^c$ has an induced $C_4$, equivalently if $G$ has two disjoint edges as an induced subgraph, then $L(I(G)^k)$ is not graded by degree of monomials (up to a shift). Moreover,
\begin{lemma}\label{lem:NoGrading}
If $G$ is a connected graph and $G^c$ has an induced $C_4$ then $L(I(G))$ is not pure.
\end{lemma}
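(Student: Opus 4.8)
The plan is to prove non-purity directly, by exhibiting two maximal chains of $L(I(G))$ from $\hat{0}=1$ to $\hat{1}=m_G$ that have different lengths. First I would translate the hypothesis: an induced $C_4$ in $G^c$ is an induced $2K_2$ in $G$, so there are four vertices $a,b,c,d$ with $\{a,b\},\{c,d\}\in E(G)$ and $\{a,c\},\{a,d\},\{b,c\},\{b,d\}\notin E(G)$. Since $G$ is connected with $n:=|V|\ge 4$ vertices it has no isolated vertex, so $m_G=x_1\cdots x_n$ has degree $n$, and the atoms of $L(I(G))$ are exactly the edge-monomials $x_ix_j$ (any $\lcm$ of two or more distinct edges strictly dominates a single edge, hence does not cover $1$).

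The crucial observation is that $x_ax_b \lessdot x_ax_bx_cx_d$ is a \emph{covering} relation in $L(I(G))$ whose degree jump equals $2$. The only monomials strictly between $x_ax_b$ and $x_ax_bx_cx_d$ in the divisibility order are $x_ax_bx_c$ and $x_ax_bx_d$, and I claim neither lies in $L(I(G))$. Indeed, a monomial of $L(I(G))$ with support $\{a,b,c\}$ would be the $\lcm$ of edges contained in $\{a,b,c\}$ whose vertex union is $\{a,b,c\}$; but $\{a,b\}$ is the only edge inside $\{a,b,c\}$ because $\{a,c\},\{b,c\}\notin E(G)$, and its $\lcm$ is $x_ax_b\ne x_ax_bx_c$. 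The same argument rules out $x_ax_bx_d$, using $\{a,d\},\{b,d\}\notin E(G)$. This is exactly the step where inducedness of the $2K_2$ is used, and it is the main point to verify with care; everything else is bookkeeping built on it.

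Given this covering relation, I would build the two chains using connectivity. For the short chain, start $1 \lessdot x_ax_b \lessdot x_ax_bx_cx_d$ and then order the remaining vertices $w_1,\dots,w_{n-4}$ so that each $w_i$ is adjacent to a vertex already chosen (possible since $G$ is connected), adjoining them one at a time; each such step raises the degree by $1$ and is therefore automatically a cover, and the chain ends at $m_G$, with total length $2+(n-4)=n-2$. For the long chain, pick a spanning tree of $G$ and order the vertices $v_1,\dots,v_n$ so that $\{v_1,v_2\}$ is an edge and each later $v_i$ is a tree-neighbour of an earlier one; then $1\lessdot x_{v_1}x_{v_2}\lessdot x_{v_1}x_{v_2}x_{v_3}\lessdot\cdots\lessdot m_G$ is a maximal chain whose steps after the first are all degree-$1$ covers, of length $n-1$. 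Since $n-2\ne n-1$, these two maximal chains have different lengths and $L(I(G))$ is not pure. I note that connectivity is essential only to produce the length-$(n-1)$ chain; the short chain merely needs the induced $2K_2$ and a connected frontier from $\{a,b,c,d\}$ to the rest of $V$.
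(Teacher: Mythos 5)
Your proof is correct and follows essentially the same route as the paper's: exhibit a long maximal chain of length $|V(G)|-1$ built from a spanning tree and a short maximal chain beginning $1\lessdot x_ax_b\lessdot x_ax_bx_cx_d$. You merely spell out the detail the paper leaves implicit, namely that inducedness of the $2K_2$ forces $x_ax_bx_c$ and $x_ax_bx_d$ to lie outside $L(I(G))$, making the degree-two jump a covering relation.
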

\begin{proof}
As $G$ is connected there is a maximal chain in $[1,m_G]$ of length $|V(G)|$: look on a sequence of edges which form a spanning tree in $G$ and such that every initial segment forms a connected graph. The joins corresponding to initial segments form a maximal chain of length $|V(G)|$.

As $G$ has induced two disjoint edges $\{a,b\},\{c,d\}$ there is a maximal chain in $[1,m_G]$ of length smaller than $|V(G)|$: look on a maximal chain $(1, x_ax_b ,x_ax_b x_cx_d,...)$.
\end{proof}
For the path of length $n\geq 5$, $P_n$,  the lcm-lattice of $P_n$ is not pure then.
It can be shown that e.g. the conclusion of Theorem \ref{thm:regClawFree} on regularity fails for these graphs (which are claw free but contain an induced $C_4$ in the complement). Actually $\reg(I(P_n))\to \infty$ as $n\to \infty$.   See \cite{Kummini} for a detailed analysis.

\section{Chordal graphs}\label{sec:chordal}
A graph is \emph{chordal} if it has no induced cycles of length $>3$.
In particular, chordal graphs have no induced $C_4$.
Dirac characterization of chordal graphs \cite{DiracOrder} implies that if $G^c$ is chordal then the vertices of $G$ can be totally ordered such that if $i,j,k\in V(G)$, $k>i,j$ and $\{i,j\}\in E(G)$ then either $\{i,k\}\in E(G)$
or $\{j,k\}\in E(G)$. Such an order is called a \emph{Dirac order} on $V(G)$.

A pure simplicial complex $\Delta$ is \emph{constructible} if it is a simplex or empty, or inductively, if $\Delta=\Delta_1\cup \Delta_2$ such that $\Delta_1$, $\Delta_2$ and
$\Delta_1\cap \Delta_2$ are constructible and $\dim(\Delta_1)=\dim(\Delta_2)=\dim(\Delta_1\cap \Delta_2) +1$. If $\Delta$ is constructible of dimension $d$ then
$\Delta$ is $(d-1)$-connected; in particular, a nonzero reduced homology
$\tilde{H}_i(\Delta)$ may appear only in dimension $i=d$.

\begin{theorem}\label{thm:Chordal}
If $G^c$ is chordal then $\Delta((1,m_G))$ is constructible.
\end{theorem}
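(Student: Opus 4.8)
The plan is to induct on the number of vertices of $G$ and to decompose $\Delta((1,m_G))$ along the coatoms of the interval $[1,m_G]$. First I would record the combinatorial model of the lattice: an element of $L(I(G))$ is the squarefree monomial $m_W=\prod_{v\in W}x_v$ for a vertex set $W$ such that the induced subgraph $G[W]$ has no isolated vertex, ordered by inclusion of the $W$'s, with $m_G$ corresponding to $W=V$. Since a chordal graph has no induced $C_4$, Proposition~\ref{prop:gradedLCM} applies and the lattice is pure with $\rank(m_W)=|W|-1$, so $\Delta((1,m_G))$ is pure of dimension $|V|-3$. Moreover, for a coatom $c=m_{V\setminus v}$ the closed interval $[1,c]$ is isomorphic to $L(I(G[V\setminus v]))$, whose complement $(G^c)[V\setminus v]$ is again chordal, so the induction hypothesis is available on it.

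Next I would write $\Delta((1,m_G))=\bigcup_{c}\bigl(c*\Delta((1,c))\bigr)$, the union over the coatoms $c$ of $[1,m_G]$, where $*$ denotes coning (adding $c$ on top of each maximal chain below it). This identity holds because every facet of the order complex is a maximal chain of the open interval whose top element is an element covered by $m_G$, i.e.\ a coatom. Each $(1,c)$ is the open interval of $L(I(G[V\setminus v]))$, so by the induction hypothesis $\Delta((1,c))$ is constructible, of dimension $|V|-4$ by purity; hence each cone $c*\Delta((1,c))$ is constructible of dimension $|V|-3$, the full dimension of $\Delta((1,m_G))$. Here I use that a cone over a constructible complex is constructible, as the defining decomposition is preserved under coning and the dimension condition is inherited.

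To assemble these cones into one constructible complex I would order the coatoms $c_1,\dots,c_r$ compatibly with the Dirac order on the removed vertices and set $\Gamma_j=\bigcup_{i\le j}c_i*\Delta((1,c_i))$, proving by induction on $j$ that $\Gamma_j$ is constructible. The inductive step splits $\Gamma_j=\Gamma_{j-1}\cup\bigl(c_j*\Delta((1,c_j))\bigr)$. Since a face of the new cone containing $c_j$ can never lie in an earlier cone (as $c_j\not\le c_i$ for $i\neq j$), the intersection equals the full subcomplex $\Delta(Q_j)$ of $\Delta((1,c_j))$ on the order ideal $Q_j=\{m_W\in(1,c_j):v_i\notin W\text{ for some }i<j\}$, i.e.\ the elements below $c_j$ that avoid at least one earlier coatom-vertex.

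The main obstacle is precisely this intersection: for the decomposition to go through I must show that $\Delta(Q_j)$ is \emph{pure of dimension $|V|-4$} and is itself \emph{constructible}. This is where the Dirac order and chordality enter essentially. Purity amounts to showing that every element of $Q_j$ can be enlarged, within $Q_j$, to a coatom of $[1,c_j]$ of the form $m_{V\setminus\{v_j,u\}}$ with $u$ an earlier coatom-vertex; the clique condition defining a Dirac order is exactly what guarantees that some earlier vertex can be removed while keeping the vertex set edge-covered, so that no top-dimensional face of $\Delta((1,c_j))$ is lost. For constructibility of $\Delta(Q_j)$ I would run a secondary induction, either recognizing $Q_j$ as a union of sub-cones $(1,c_i\wedge c_j]$ to which the same two-piece scheme applies, or identifying it with the open interval of a smaller lcm-lattice already covered by the outer induction. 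Verifying that the Dirac order makes all these intersections simultaneously pure and constructible---equivalently, that it is a genuine decomposition (recursive coatom) order---is the technical heart of the argument; everything else is formal manipulation of cones together with the purity supplied by Proposition~\ref{prop:gradedLCM}.
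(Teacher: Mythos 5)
Your reduction to the statement ``each intersection $\Delta(Q_j)$ is pure of dimension $|V|-4$ and constructible'' is exactly where the proof has to happen, and you leave it as an assertion (``this is the technical heart'') rather than prove it. Worse, the specific recipe you give --- order the coatoms compatibly with a Dirac order on the removed vertices --- fails. Take $G^c$ to be the path $1\,\mbox{--}\,2\,\mbox{--}\,3\,\mbox{--}\,4\,\mbox{--}\,5$, so $G=\overline{P_5}$ has edges $13,14,15,24,25,35$ and every $m_{V\setminus v}$ is a coatom. The order $1<2<3<4<5$ is a Dirac order. If the first two coatoms are $c_1=m_{V\setminus 1}$ and $c_2=m_{V\setminus 2}$, then $G[\{3,4,5\}]$ has $4$ as an isolated vertex, so $c_1\wedge c_2$ is the single atom $x_3x_5$ and the intersection of the first two cones is a point, of dimension $0$ rather than the required $|V|-4=1$. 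Reversing the order gives the same failure with $c_1\wedge c_2=x_1x_3$ (vertex $2$ isolated in $G[\{1,2,3\}]$); and other legitimate Dirac orders, e.g.\ $3<2<1<4<5$, produce a non-pure $Q_3$ a step later. Some ordering of the coatoms does make this particular example work, but it is not the one you prescribe, and you give no argument that a valid ``recursive coatom order'' always exists. Since purity of $Q_j$ amounts to controlling which pairs $G[V\setminus\{u_i,u_j\}]$ acquire isolated vertices --- a condition not directly governed by the Dirac property --- this is a genuine gap, not a routine verification. (A secondary issue: your claim that $Q_j$ can be ``identified with the open interval of a smaller lcm-lattice already covered by the outer induction'' is also unsubstantiated; $Q_j$ is a union of intervals $(1,c_i\wedge c_j]$ and need not be an interval itself.)

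For comparison, the paper avoids coatoms entirely: it decomposes $\Delta((1,m_G))$ by the \emph{bottoms} of the maximal chains, adding at stage $l$ the chains whose bottom edge contains the Dirac-largest vertex $v_l$. The new piece at each stage is the order complex of the upper set $[x_{v_l},m_G]$, which is a semimodular lattice and hence shellable with no further work, and the Dirac condition is used only to show that the intersection with the earlier pieces is pure of codimension $1$ and is isomorphic to the analogous complex for $G[V\setminus\{v_l\}]$. That is the structural reason the atom-side decomposition goes through while the coatom-side one runs into the isolated-vertex obstruction above.
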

\begin{proof}
If $|V(G)|\leq 3$ or $E(G)=\emptyset$ then the assertion is trivial. For $|V(G)|> 3$ and $E(G)\neq \emptyset$, let $v_1<v_2<...<v_t$ be a Dirac order on $V(G)$. Note that the induced graph on a subset of the vertices of a chordal graph is chordal. By induction the assertion holds for the induced subgraphs $G_l=G[v_1,...,v_l]$ for $2\leq l \leq t-1$.
Let $\Delta_l=\Delta_l(G)$ be the subcomplex of $\Delta((1,m_G))$ spanned by the maximal chains in $(1,m_G)$ whose bottom is an edge contained in $\{v_1,...,v_l\}$. Then
$\Delta((1,m_G))=\Delta_t$. Let $l_0$ be the minimal $i$ such that $\Delta_i\neq \emptyset$.

We now show that  for any $l\geq l_0$ $\Delta_l$ is constructible.
For $l=l_0$ $\Delta_{l_0}=\Delta((x_{v_{l_0}},m_G))$
where $[x_{v_l},m_G]$ is the restriction of $[1,m_G]$ to monomials divisible by $x_{v_l}$, and adding $x_{v_l}$ as a minimum.
Note that $[x_{v_{l_0}},m_G]$ is a semimodular lattice, hence by \cite{Bjorner-shellableCMposets} $\Delta((x_{v_{l_0}},m_G))$ is shellable and in particular constructible. Note that  $\dim(\Delta_{l_0})=\deg(m_G)-3$ by Proposition \ref{prop:gradedLCM}.
For $l>l_0$,
$$\Delta_l=\Delta_{l-1}\cup \Delta((x_{v_l},m_G)).$$
Again, as $[x_{v_l},m_G]$ is a semimodular then
$\Delta((x_{v_l},m_G))$ is constructible. Further,
$\Delta_{l-1}$ is constructible by the induction hypothesis and  $\dim(\Delta_{l-1})=\dim(\Delta((x_{v_l},m_G)))=\deg(m_G)-3$.
The intersection $\Delta_{l-1}\cap \Delta((x_{v_l},m_G))$ is pure of dimension
$\deg(m_G)-4$, and its (nonempty collection of) facets are the maximal chains in $(1,m_G)$ with bottom $x_{v_l}x_{v_i}x_{v_j}$ where $\{v_i,v_j\}\in G$ and $i,j<l$. This follows from the definition of Dirac order. Moreover, $\Delta_{l-1}\cap \Delta((x_{v_l},m_G))$ is combinatorially isomorphic to $\Delta_{l-1}(G[V-\{v_{l}\}])$ which is constructible by the induction hypothesis.
We conclude that $\Delta_t$ is constructible.
\end{proof}

\begin{corollary}\label{cor:ChordalCM}
If $G^c$ is chordal then $\Delta((1,m_G))$ is Cohen-Macaulay.
\end{corollary}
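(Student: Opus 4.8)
The plan is to deduce the corollary from Theorem~\ref{thm:Chordal} together with the standard implication that a constructible complex is Cohen--Macaulay over any field $k$. By Theorem~\ref{thm:Chordal} the complex $\Delta((1,m_G))$ is constructible, so it suffices to establish this general principle; it is classical (see Bj\"{o}rner \cite{Bjorner:TopologicalMethods}), so one option is simply to cite it. I would, however, indicate the short self-contained argument, since the excerpt has already recorded the one topological input that is needed for the top face.

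Concretely, I would invoke Reisner's criterion: a pure $d$-dimensional complex $\Delta$ is Cohen--Macaulay over $k$ precisely when $\tilde{H}_i(\lk_\Delta \sigma;k)=0$ for every face $\sigma$ and every $i<\dim \lk_\Delta \sigma$. Taking $\sigma=\emptyset$ gives $\lk_\Delta\emptyset=\Delta$ itself, and here the required vanishing is exactly the statement already noted in the excerpt: a constructible complex of dimension $d$ has reduced homology concentrated in dimension $d$. The remaining point, and the heart of the matter, is the lemma that links of constructible complexes are again constructible; granting this, every $\lk_\Delta\sigma$ is constructible of dimension $d-|\sigma|$, hence again has reduced homology only in its top dimension, and Reisner's criterion is satisfied.

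I would prove the lemma by induction on the constructible decomposition. If $\Delta=\Delta_1\cup\Delta_2$ with $\Delta_1,\Delta_2,\Delta_1\cap\Delta_2$ constructible and $\dim\Delta_1=\dim\Delta_2=\dim(\Delta_1\cap\Delta_2)+1=d$, fix a face $\sigma$. With the convention that $\lk_{\Delta_i}\sigma$ is void when $\sigma\notin\Delta_i$, one checks directly from the definitions that $\lk_\Delta\sigma=\lk_{\Delta_1}\sigma\cup\lk_{\Delta_2}\sigma$ and that $\lk_{\Delta_1}\sigma\cap\lk_{\Delta_2}\sigma=\lk_{\Delta_1\cap\Delta_2}\sigma$. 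If $\sigma$ lies in only one piece the link reduces to a single constructible link by the inductive hypothesis (the base case being a simplex, all of whose links are simplices). If $\sigma\in\Delta_1\cap\Delta_2$, purity makes all three links pure with dimensions dropping uniformly by $|\sigma|$, so the decomposition of $\Delta$ induces a constructible decomposition of $\lk_\Delta\sigma$.

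The one delicate case, which I expect to be the main (though purely bookkeeping) obstacle, is when $\sigma\in\Delta_1\cap\Delta_2$ but $\lk_{\Delta_1\cap\Delta_2}\sigma$ is empty: then the induced "decomposition" is a disjoint union, and a constructible complex of positive dimension is connected, so a disjoint union would not be constructible. However this forces $\sigma$ to be a facet of $\Delta_1\cap\Delta_2$, hence $|\sigma|=d$ and $\dim\lk_\Delta\sigma=0$; and a $0$-dimensional complex (any nonempty set of vertices) is constructible by the definition with $\dim\emptyset=-1$. Thus the edge case is harmless, the induction goes through, and combining it with Reisner's criterion shows $\Delta((1,m_G))$ is Cohen--Macaulay. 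The substantive work was already done in Theorem~\ref{thm:Chordal}; the corollary is its formal consequence.
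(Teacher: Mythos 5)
Your argument is correct, but it reaches the conclusion by a different route than the paper. Both proofs start from Reisner's criterion and dispose of the face $F=\emptyset$ by Theorem~\ref{thm:Chordal}. For nonempty faces, the paper does \emph{not} prove (or invoke) the general principle that constructibility is inherited by links; instead it uses the special structure of order complexes: the link of a chain $F=\{a_1<\dots<a_f\}$ in $\Delta((1,m_G))$ is the join $\Delta((1,a_1))*\Delta((a_1,a_2))*\dots*\Delta((a_f,m_G))$, where the bottom factor is constructible by Theorem~\ref{thm:Chordal} (applied to the induced subgraph on $\supp(a_1)$, whose complement is again chordal) and the remaining factors are shellable because the corresponding upper intervals are semimodular lattices; the K\"{u}nneth formula then concentrates the homology of the join in top dimension. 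You instead prove the classical general fact that a constructible complex is Cohen--Macaulay, via the lemma that $\lk_\Delta\sigma=\lk_{\Delta_1}\sigma\cup\lk_{\Delta_2}\sigma$ with $\lk_{\Delta_1}\sigma\cap\lk_{\Delta_2}\sigma=\lk_{\Delta_1\cap\Delta_2}\sigma$ inherits a constructible decomposition, with the correct handling of the degenerate case where $\sigma$ is a facet of $\Delta_1\cap\Delta_2$ (there $\lk_{\Delta_1\cap\Delta_2}\sigma=\{\emptyset\}$, the empty simplex of dimension $-1$, rather than the void complex, so the decomposition is still legal --- and in any case a $0$-dimensional link can only carry top-dimensional reduced homology). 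Your route is more general and self-contained, needing Theorem~\ref{thm:Chordal} only once and no input about semimodularity or joins; the paper's route is shorter given the machinery already on the table, since shellability of the upper intervals and the join structure of links of chains were already established and reused elsewhere. Both are complete proofs.
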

\begin{proof}
Using Reisner theorem, e.g. \cite[Corollary 4.2]{StanleyGreenBook}, we need to show
that for any $F\in \Delta=\Delta((1,m_G))$ and  $i<\deg(m_G)-3-|F|$, the link $\lk_{\Delta}F$ satisfies $\tilde{H}_i(\lk_{\Delta}F)=0$. For $F=\emptyset$ this holds by Theorem \ref{thm:Chordal}.
For $F=\{a_1<...<a_f\}$, $\lk_{\Delta}F=\Delta((1,a_1))*\Delta((a_1,a_2))*...*\Delta((a_{f-1},a_f))*\Delta((a_f,m_G))$ where $*$ denotes join.
By Theorem \ref{thm:Chordal}  $\Delta((1,a_1))$ is constructible and by semimodularity $\Delta((a_i,a_{i+1}))$ and $\Delta((a_f,m_G))$ are shellable, hence in each of these pure complexes only the top dimensional homology group may not vanish. By K\"{u}nneth formula only the top dimensional homology group of their join may not vanish.
\end{proof}

\begin{corollary}\label{cor:ChordalLinRes}\cite{Froberg}
If $G^c$ is chordal then $I(G)$ has a linear resolution.
\end{corollary}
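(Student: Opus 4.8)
The plan is to derive Corollary \ref{cor:ChordalLinRes} directly from the Cohen--Macaulay conclusion of Corollary \ref{cor:ChordalCM}, using the dictionary between the topology of the lcm-lattice and the multigraded Betti numbers provided by Theorem \ref{thm:GPW} and the regularity formula \eqref{eq:reg}. Recall that $I(G)$ having a linear resolution means exactly that $\reg(I(G))=2$, since all minimal generators of $I(G)$ sit in degree $2$; equivalently, $\beta_{i,m}=0$ whenever $\deg(m)\neq i+2$. So the task reduces to showing that the only multidegrees $m$ carrying nonvanishing homology $\tilde{H}_{i-1}(\Delta((1,m));k)$ do so in the single ``linear strand,'' i.e. in homological degree $i=\deg(m)-2$.

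First I would restrict to $m=m_G$ and to the induced-subgraph intervals that arise inside $L(I(G))$. The key structural observation is that every open interval $(1,m)$ in $L(I(G))$ is itself isomorphic to the lcm-lattice of an edge ideal on the support of $m$: if $m=m_H$ for the induced subgraph $H=G[\supp(m)]$, then $\Delta((1,m))\cong\Delta((1,m_H))$. Since an induced subgraph of a graph whose complement is chordal again has chordal complement (chordality is inherited by induced subgraphs, as already noted in the proof of Theorem \ref{thm:Chordal}), Corollary \ref{cor:ChordalCM} applies to every such interval. Thus $\Delta((1,m))$ is Cohen--Macaulay, and in particular constructible of dimension $\deg(m)-3$, for every $m\in L(I(G))$ that is realized as $m_H$.

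Next I would invoke the constructibility half of the argument: a constructible complex of dimension $d=\deg(m)-3$ is $(d-1)$-connected, so its only possibly-nonzero reduced homology is $\tilde{H}_{d}=\tilde{H}_{\deg(m)-3}$. Feeding this into Theorem \ref{thm:GPW} gives $\beta_{i,m}=\dim_k\tilde{H}_{i-1}(\Delta((1,m));k)=0$ unless $i-1=\deg(m)-3$, that is, unless $i=\deg(m)-2$. By the regularity formula \eqref{eq:reg}, the only $(i,j)$ for which homology can appear satisfy $j=\deg(m)-i=2$, so $\reg(I(G))=2$ and $I(G)$ has a linear resolution.

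The main obstacle, and the point requiring the most care, is the identification $\Delta((1,m))\cong\Delta((1,m_{G[\supp(m)]}))$, together with verifying that the degree bookkeeping is uniform across all intervals so that every nonzero Betti number genuinely lands on the line $j=2$. Here Proposition \ref{prop:gradedLCM} does the heavy lifting: it pins down $\rank(m)=\deg(m)-2k+1$ (with $k=1$), which is what forces the constructible complex $\Delta((1,m))$ to have the exact dimension $\deg(m)-3$ and hence concentrates homology in precisely the linear degree. Once this degree alignment is confirmed for every $m\in L(I(G))$ simultaneously, the conclusion is immediate from \eqref{eq:reg}.
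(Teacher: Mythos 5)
Your proof is correct and is essentially the paper's own argument: the paper's one-line proof cites Theorem \ref{thm:Chordal} together with equation \eqref{eq:reg}, implicitly relying on exactly the points you spell out, namely that each interval $(1,m)$ is the lcm-lattice interval of the induced subgraph $G[\supp(m)]$ (whose complement is again chordal) and that constructibility in dimension $\deg(m)-3$ concentrates homology on the linear strand. One small slip: the implication ``Cohen--Macaulay, and in particular constructible'' runs the wrong way (constructible implies Cohen--Macaulay, not conversely), but this is harmless since Theorem \ref{thm:Chordal} gives constructibility of each $\Delta((1,m_H))$ directly.
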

\begin{proof}
By Theorem \ref{thm:Chordal} and equation (\ref{eq:reg}), $\reg(I(G))=2$ hence $I(G)$ has a linear resolution.
\end{proof}
The converse of Corollary \ref{cor:ChordalLinRes}, also proved by Fr\"{o}berg, will follow from Theorem \ref{thm:C_n} in the next section.

\section{Induced cycles}\label{sec:cycles}
If $H$ is an \emph{induced} subgraph of $G$, then $[1,m_H]$ is an interval in
$[1,m_G]$ and hence $\reg(I(H))\leq \reg(I(G))$.
If $G^c$ is not chordal then it contains an induced cycle $H^c=C_n$, of length $n>3$.
If $H^c=C_4$ then $\Delta((1,m_H))$ is the zero dimensional sphere while $\deg(m_H)=4$ hence $3\leq \reg(I(G))$, thus $I(G)$ does not have a linear resolution.
What happens if $n\geq 5$?:

\begin{theorem}\label{thm:C_n}
Let $n\geq 3$ and $G^c=C_n$. Then $H_{\ast}(\Delta((1,m_G)))\cong H_{\ast}(\mathbb{S}^{n-4})$, where $\mathbb{S}^d$ is the
$d$-dimensional sphere.
\end{theorem}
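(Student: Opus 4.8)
The plan is to reduce the computation to the well-understood topology of the clique complex of $G^c=C_n$, using the fact that $I(G)$ is the Stanley--Reisner ideal of the independence complex $\mathrm{Ind}(G)$, and that $\mathrm{Ind}(G)$ is exactly the clique complex of the complement $G^c=C_n$. Since every vertex of $G=\overline{C_n}$ has degree $n-3\ge 1$ for $n\ge 4$, the top element is the full-support squarefree monomial $m_G=x_1\cdots x_n$, so the only interesting multigraded Betti number sits in degree $V=\{1,\dots,n\}$. I would first record the ``relation'' promised in the introduction: combining Theorem~\ref{thm:GPW} with Hochster's formula (see \cite{StanleyGreenBook}) at this top degree identifies the homology of $\Delta((1,m_G))$ with that of $\mathrm{Ind}(G)=\mathrm{clq}(C_n)$ up to a shift in dimension.

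Concretely, Theorem~\ref{thm:GPW} gives $\beta_{i,m_G}(I(G))=\dim_k\tilde H_{i-1}(\Delta((1,m_G)))$, while Hochster's formula applied to $\mathrm{Ind}(G)$ gives $\beta_{i+1,V}(S/I(G))=\dim_k\tilde H_{n-i-2}(\mathrm{clq}(C_n))$. As $\beta_{i,m_G}(I(G))=\beta_{i+1,V}(S/I(G))$ and $m_G=x_V$, these two expressions are equal, yielding
\[
\dim_k\tilde H_{i-1}(\Delta((1,m_G)))=\dim_k\tilde H_{n-i-2}(\mathrm{clq}(C_n))\qquad\text{for all }i.
\]
Now for $n\ge 4$ the cycle $C_n$ has no triangles, so its clique complex is the $1$-dimensional cycle itself and is homotopy equivalent to $\mathbb{S}^1$; hence $\tilde H_{n-i-2}(\mathrm{clq}(C_n))$ is $k$ when $n-i-2=1$ and $0$ otherwise. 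Substituting, $\tilde H_{j}(\Delta((1,m_G)))$ is one-dimensional exactly for $j=n-4$ and vanishes otherwise, which over a field is precisely the reduced homology of $\mathbb{S}^{n-4}$.

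It remains to dispose of the boundary cases and to identify the one real obstacle. For $n=3$ the complement is empty (so $I(G)=0$) and the statement is vacuous, while for $n=4$ one checks directly that $(1,m_G)$ consists of the two atoms $x_1x_3,x_2x_4$, whose order complex is $\mathbb{S}^0$, matching $n-4=0$; note that here Proposition~\ref{prop:gradedLCM} is unavailable since $G^c=C_4$ is an induced $C_4$, which is exactly why I route the argument through Hochster's formula rather than through purity and shellability. The main obstacle is the bookkeeping of the three index shifts (the $-1$ of Theorem~\ref{thm:GPW}, the passage from $S/I$ to $I$, and the $|W|-i-1$ of Hochster's formula) and checking that the two Betti numbers being compared really coincide at full support. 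A more self-contained, intrinsically poset-topological route would instead identify $\Delta((1,m_G))$ up to homotopy with the crosscut complex of the non-covering edge subsets via the nerve/crosscut theorem, but computing that complex directly for $\overline{C_n}$ looks more delicate than the duality shortcut, so I would fall back on it only if an independent verification were wanted.
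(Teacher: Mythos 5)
Your proof is correct, but it takes a genuinely different route from the paper's. The paper argues entirely inside the lcm-lattice: it identifies $\Delta((1,m_G))$ with the induced subcomplex of the barycentric subdivision of $\partial\Delta^{n-1}$ on the complement of the vertex set $\{$singletons, consecutive pairs, consecutive triples$\}$ (these are exactly the subsets $W$ on which $G[W]$ has an isolated vertex, i.e.\ the non-elements of $L(I(G))$), observes that the excluded induced subcomplex deformation retracts onto $C_n$, and applies Alexander duality in the $(n-2)$-sphere. You instead equate the two formulas for the top multigraded Betti number --- Theorem \ref{thm:GPW} on one side, Hochster's formula for $\mathrm{clq}(C_n)\simeq\mathbb{S}^1$ on the other --- and your index bookkeeping ($\tilde H_j(\Delta((1,m_G)))\cong\tilde H_{n-3-j}(\mathrm{clq}(C_n))$) is right; it agrees with Proposition \ref{prop:lcmHochster}, which the paper proves immediately afterwards by a different (Quillen fiber / combinatorial Alexander duality) argument and which formalizes exactly the equivalence you are exploiting. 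Two remarks on what each approach buys. First, your argument literally yields only $\dim_k$ of the homology over a field; to get the statement as written (an isomorphism of homology groups with $\mathbb{S}^{n-4}$) you should either run it over $\mathbb{Q}$ and every $\mathbb{F}_p$ and invoke universal coefficients to rule out torsion, or settle for the field-coefficient version, which is all that equation (\ref{eq:reg}) needs anyway; the paper's Alexander-duality argument gives the integral statement directly. Second, your route is precisely the ``Hochster applied to the clique complex of $G^c$'' method that the paper cites as the known approach and is deliberately offering an alternative to, so while logically sound and arguably shorter, it does not demonstrate the lcm-lattice technique that is the point of the section. Your handling of the boundary cases $n=3,4$ and the observation that $m_G$ has full support for $n\ge 4$ are both fine.
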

\begin{proof}
For $n=3$ the assertion is trivial. For $n\geq 4$ let $\Delta$ be the barycentric subdivision of the boundary of the simplex on $n$ vertices. Thus, the vertices of $\Delta$ are labeled by the proper nonempty subsets of $[n]$ and its faces correspond to chains of subsets ordered by inclusion.
Let $\Gamma$ be the induced subcomplex of $\Delta$ with vertex set $V$ consisting of all singletons, all consecutive pairs $\{i,i+1\}$ and all consecutive triples $\{i-1,i,i+1\}(\mod n)$ in $[n]$.

One easily checks that $\Gamma$ deformation retracts on $C_n$ (retract the triangles with vertex $\{i-1,i,i+1\}$ on the length $2$ path $(i-1,i,i+1)$).
As $\Gamma$ is induced, $\Delta - \Gamma$ deformation retracts onto the induced subcomplex on the complementary set of vertices $\Delta[V(\Delta)-V(\Gamma)]$.
As $\Delta$ is a $(n-2)$-sphere, it follows from Alexander duality
\cite[Chapter 8, \S 71]{Munkres:AlgebraicTopology-1984} that for every $i$
$$\tilde{H}_i(\Delta[V(\Delta)-V(\Gamma)])\cong
\tilde{H}_i(\Delta-\Gamma)\cong
\tilde{H}^{n-3-i}(\Gamma)\cong \tilde{H}^{n-3-i}(\mathbb{S}^1)
.$$
By the obvious bijection between subsets of $[n]$ and square free monomials with variables in $\{x_1,...,x_n\}$, we get a combinatorial isomorphism $\Delta((1,m_G))\cong \Delta[V(\Delta)-V(\Gamma)]$, and hence
$H_{i}(\Delta((1,m_G)))\cong H_{i}(\mathbb{S}^{n-4})$
for all $i$.
\end{proof}

\begin{corollary}\label{cor:C_nLinRes}\cite{Froberg}
If $G^c$ is not chordal then $I(G)$ does not have a linear resolution.
\end{corollary}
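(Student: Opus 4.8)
The plan is to reduce to the single induced cycle produced by Theorem \ref{thm:C_n} and then transport the nonvanishing homology back to the regularity of the full graph. Since $G^c$ is not chordal, it contains an induced cycle of length $n\geq 4$; call the corresponding induced subgraph of $G$ by $H$, so that $H^c=C_n$. First I would record that $\deg(m_H)=n$: in $H=(C_n)^c$ every vertex has degree $n-3\geq 1$ for $n\geq 4$, so each of the $n$ variables divides $m_H$ and $m_H=x_1\cdots x_n$.

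Next, Theorem \ref{thm:C_n} gives $\tilde{H}_{n-4}(\Delta((1,m_H)))\neq 0$, the unique nonvanishing reduced homology of $\mathbb{S}^{n-4}$. Feeding this into Theorem \ref{thm:GPW} applied to $I(H)$ yields $\beta_{n-3,m_H}\neq 0$, since one needs $i-1=n-4$, i.e.\ $i=n-3$. Because $\deg(m_H)=n=(n-3)+3$, this is a Betti number in homological degree $i=n-3$ and internal degree $j=3$, so equation (\ref{eq:reg}) forces $\reg(I(H))\geq 3$.

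Finally, because $H$ is an induced subgraph of $G$, the interval $[1,m_H]$ sits inside $[1,m_G]$ and hence $\reg(I(H))\leq \reg(I(G))$, as observed at the start of this section. Combining the two inequalities gives $\reg(I(G))\geq 3>2$, so by the criterion used in Corollary \ref{cor:ChordalLinRes} the ideal $I(G)$ cannot have a linear resolution.

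I do not anticipate a genuine obstacle here: the argument is simply an assembly of Theorem \ref{thm:C_n}, Theorem \ref{thm:GPW}, and monotonicity of regularity under passing to induced subgraphs. The one point demanding care is the bookkeeping that sends the top homology of $\mathbb{S}^{n-4}$ to the bidegree $(i,j)=(n-3,3)$ in (\ref{eq:reg}); the case $n=4$ is exactly the $\mathbb{S}^0$ computation already recorded before the statement of Theorem \ref{thm:C_n}, which serves as a convenient sanity check of the indexing.
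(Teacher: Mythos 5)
Your argument is correct and is essentially the paper's own proof, just with the bookkeeping made explicit: extract an induced cycle $H^c=C_n$ ($n\geq 4$), apply Theorem \ref{thm:C_n} to get $\tilde{H}_{n-4}(\Delta((1,m_H)))\neq 0$, convert via Theorem \ref{thm:GPW} and equation (\ref{eq:reg}) into $\reg(I(H))\geq 3$, and conclude by monotonicity of regularity under induced subgraphs. The index check $(i,j)=(n-3,3)$ and the $n=4$ sanity check are exactly right.
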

\begin{proof}
By Theorem \ref{thm:C_n} and (\ref{eq:reg}), there is some $n\geq 4$ such that $\reg(I(G))\geq \reg(C_n)>2$ hence $I(G)$ does not have a linear resolution.
\end{proof}
In the recent papers \cite{Woodroofe, Engstrom-Dochtermann, Ramos-Gimenez}
Hochster formula (Theorem \ref{thm:Hochster}), applied to the clique complex of $G^c$, was used to derive Fr\"{o}berg's theorem, i.e. Corollaries \ref{cor:ChordalLinRes} and \ref{cor:C_nLinRes}. The main result in the later reference, namely \cite[Theorem 1.1]{Ramos-Gimenez}, easily follows from Theorems \ref{thm:Chordal} and \ref{thm:C_n}.

We show now that as far as homology is concerned, Hochster formula and the lcm method are equivalent. More precisely:
\begin{proposition}\label{prop:lcmHochster}
Let $E(G)\neq \emptyset$, $|V(G)|=n$, $k$ a field and denote by $\Delta(G^c)$ the clique complex on $G^c$.
Then for any $i$ the reduced homology groups satisfy
$$\tilde{H}_{i}(\Delta(G^c);k)\cong \tilde{H}_{n-3-i}(\Delta((1,m_G));k).$$
\end{proposition}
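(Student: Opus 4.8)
The plan is to observe that both sides of the asserted isomorphism compute, up to reindexing, the same multigraded Betti number of the edge ideal, and to read this off from the two available homological descriptions. First I would record that $I(G)$ is exactly the Stanley--Reisner ideal of the independence complex $\mathrm{Ind}(G)=\Delta(G^c)$: the minimal nonfaces of $\Delta(G^c)$ are the two-element sets that fail to be independent in $G$, i.e.\ the edges of $G$, so $I_{\Delta(G^c)}=I(G)$ and $S/I(G)=k[\Delta(G^c)]$. Hochster's formula (Theorem \ref{thm:Hochster}) then applies. Taking the top squarefree degree $W=[n]$ — here I would use that $G$ has no isolated vertex, so that $\deg(m_G)=n$ and $\supp(m_G)=[n]$ — and noting that the restriction of $\Delta(G^c)$ to all of $[n]$ is $\Delta(G^c)$ itself, Hochster's formula reads $\beta_{j,[n]}(S/I(G))=\dim_k \tilde{H}_{n-j-1}(\Delta(G^c);k)$.

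Second, I would compute the same Betti numbers from the lcm-lattice side. Theorem \ref{thm:GPW} gives $\beta_{p,m_G}(I(G))=\dim_k \tilde{H}_{p-1}(\Delta((1,m_G));k)$, and the standard shift between a minimal free resolution of $I(G)$ and of $S/I(G)$ (noted after Theorem \ref{thm:GPW}) gives $\beta_{j,m_G}(S/I(G))=\beta_{j-1,m_G}(I(G))$. Combining these, $\beta_{j,[n]}(S/I(G))=\dim_k \tilde{H}_{j-2}(\Delta((1,m_G));k)$.

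Equating the two expressions for $\beta_{j,[n]}(S/I(G))$ yields, for every $j$, the equality $\dim_k \tilde{H}_{n-j-1}(\Delta(G^c);k)=\dim_k \tilde{H}_{j-2}(\Delta((1,m_G));k)$. Setting $i=n-j-1$, so that $j-2=n-3-i$, turns this into $\dim_k \tilde{H}_i(\Delta(G^c);k)=\dim_k \tilde{H}_{n-3-i}(\Delta((1,m_G));k)$; since over a field the finite-dimensional reduced homology groups are determined up to isomorphism by their dimension, this gives the stated isomorphism.

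The mathematical content is really just that the two methods resolve the same module, so the only genuine care needed is twofold. First is the index bookkeeping: the $\pm 1$ shift from resolving $I(G)$ versus $S/I(G)$ must be tracked exactly, and the degree in Hochster's formula must be matched to $|W|=n$. Second, and more substantively, is the support hypothesis, since the argument uses $W=[n]$, i.e.\ that every variable divides $m_G$; this fails precisely when $G$ has an isolated vertex $v$, in which case $v$ is a cone point of $\Delta(G^c)$ and forces $\tilde{H}_{\ast}(\Delta(G^c))=0$, while $\Delta((1,m_G))$ is unchanged and can carry nonzero homology. I would therefore take the proposition for $G$ without isolated vertices (equivalently read $n$ as $\deg(m_G)$). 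If a self-contained topological proof is preferred, the same identity follows from combinatorial Alexander duality inside the sphere $\mathrm{sd}(\partial\Delta^{n-1})\cong\mathbb{S}^{n-2}$: here $\Delta((1,m_G))$ is the induced subcomplex on the proper subsets $W\subseteq[n]$ for which $G[W]$ has no isolated vertex, one checks that the complementary induced subcomplex is homotopy equivalent to $\Delta(G^c)$ (generalizing the retraction in the proof of Theorem \ref{thm:C_n}), and Alexander duality on $\mathbb{S}^{n-2}$ shifts degrees by $n-3$ exactly as required.
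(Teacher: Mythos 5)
Your proof is correct, but it takes a genuinely different route from the paper's. You identify $I(G)$ with the Stanley--Reisner ideal of $\Delta(G^c)$ and compute the single multigraded Betti number in multidegree $m_G$ in two ways --- via Hochster's formula on the $G^c$ side and via Theorem \ref{thm:GPW} on the lcm-lattice side --- so the isomorphism drops out of index bookkeeping. The paper instead argues topologically: it applies Bj\"{o}rner's combinatorial Alexander duality to $\Delta(G^c)$, producing a complex $\Gamma$ on the vertex set $E(G)$ whose faces are the edge subsets not covering $V(G)$, and then uses Quillen's fiber theorem on the poset map sending an edge subset to the least common multiple of its edges to show that $\Gamma$ is homotopy equivalent to $\Delta((1,m_G))$. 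Your argument is shorter and more elementary given that both Hochster's formula and the Gasharov--Peeva--Welker theorem are already quoted in the paper, but it only yields an equality of dimensions over the field $k$; the paper's route gives an integral duality statement together with an actual homotopy equivalence between $\Delta((1,m_G))$ and an Alexander dual of $\Delta(G^c)$, which is closer in spirit to the argument of Theorem \ref{thm:C_n} (and to the alternative Alexander-duality proof you sketch at the end, which would work but which you leave at the level of a claim). Your observation about the hypothesis is a genuine and worthwhile point: taking $W=[n]$ requires $\supp(m_G)=V(G)$, so the proposition as stated needs $G$ to have no isolated vertices (equivalently one should read $n=\deg(m_G)$); the paper's own proof tacitly assumes this too, since with an isolated vertex $\Gamma$ is a full simplex and the map $\pi$ can hit $m_G$, while $\Delta((1,m_G))$ need not be acyclic.
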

\begin{proof}
Let $C$ be the set of minimal non faces of $\Delta(G^c)$. Then $C=E(G)$. Let $\Gamma$ be the simplicial complex on the vertex set $C$ with faces $F$ such that $\cup_{u\in F}u\neq V(G)$. As $V(G^c)\notin \Delta(G^c)$, by \cite[Theorem 2]{Bjorner-NoteAlexanderDuality}
\begin{equation}\label{eq:HomolBj}
\tilde{H}_{i}(\Delta(G^c);\mathbb{Z})\cong \tilde{H}^{n-3-i}(\Gamma;\mathbb{Z})
\end{equation}
for all $i$, where $\tilde{H}^{j}$ denotes the $j$-th cohomology group.

To show that $\Gamma$ is homotopy equivalent to $\Delta((1,m_G))$ consider $\Gamma':=\Gamma - \{\emptyset\}$ as a poset where faces are ordered by inclusion, and the poset map
$$\pi: \Gamma' \longrightarrow (1,m_G),\ \pi(F)=\prod\{x_i:\ i\in \cup_{u\in F}u\}.$$
Note that $\pi$ is onto. For $W\subsetneq V(G)$ such that $x_W:=\prod_{i\in W}x_i \in (1,m_G)$ the fiber $\pi^{-1}(\{y: y\leq x_W\})$ has a unique maximal element  $\{c\in C: \ c\subseteq W\}$, hence its order complex is contractible. By Quillen's fiber theorem \cite[Proposition 1.6]{Quillen} the barycentric subdivision of $\Gamma$ is homotopic to $\Delta((1,m_G))$, and hence so is $\Gamma$. Working over a field, the isomorphism between homology and cohomology together with (\ref{eq:HomolBj}) imply the result.
\end{proof}
\begin{problem}\label{prob:C_n}
Let $G^c=C_n$ for $n\geq 4$. Is $\Delta((1,m_G))$ homotopic to $\mathbb{S}^{n-4}$?
\end{problem}

\section{Claw free graphs}\label{sec:claw}
A graph $G$ is \emph{claw free} if it contains no $4$ vertices on which the induced graph is a \emph{star}, i.e. a connected graph where all vertices but one have exactly one neighbor, which is common to all of them. Claw free graphs are of great interest in combinatorics. The connectivity of the independence complex of claw free graphs was studied in \cite{Engstrom-ClawFree}; in particular it follows that a nonzero homology in the independence complex of $G$, which is the clique complex of $G^c$, can occur in arbitrarily high dimension. Using Hochster formula it means that $\sup\{\reg(I(G)): G\  \rm{is\ claw\ free}\}=\infty$.

If we restrict to claw free graphs with no induced $C_4$ in their complement, denote this family by $\mathcal{CF}$, the situation is drastically different, as Theorem \ref{thm:regClawFree} below shows.

\begin{theorem}\label{thm:regClawFree}
If $G\in \mathcal{CF}$ then $\reg(I(G))\leq 3$.
\end{theorem}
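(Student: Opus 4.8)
The plan is to transfer the problem to the clique complex of $G^c$ and prove a homological vanishing there. By (\ref{eq:reg}) I must bound $\deg(m)-i$ by $3$ whenever $\tilde H_{i-1}(\Delta((1,m));k)\neq 0$. For $m\neq 1$ write $H=G[\supp(m)]$; then $m=m_H$ and $(1,m)\cong(1,m_H)$, and $H\in\mathcal{CF}$ since $\mathcal{CF}$ is closed under induced subgraphs. Applying Proposition \ref{prop:lcmHochster} to $H$ gives $\tilde H_{i-1}(\Delta((1,m)))\cong\tilde H_{\ell}(\Delta(H^c))$ with $\ell=\deg(m)-2-i$, so that $\deg(m)-i=\ell+2$. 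Thus the theorem is equivalent to the purely topological statement that for every $G\in\mathcal{CF}$ the clique complex $\Delta(G^c)$, which is the independence complex $\mathrm{Ind}(G)$, satisfies $\tilde H_\ell(\Delta(G^c);k)=0$ for all $\ell\ge 2$.

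I would prove this vanishing by induction on $|V(G)|$. Deleting a vertex $v$ of $G$ writes $\mathrm{Ind}(G)=\mathrm{Ind}(G-v)\cup\st(v)$, where $\st(v)$ is the cone over the link $\lk(v)=\mathrm{Ind}(G-N_G[v])$ and $\mathrm{Ind}(G-v)\cap\st(v)=\lk(v)$. Both $G-v$ and $G-N_G[v]$ lie in $\mathcal{CF}$, so by induction their independence complexes carry no reduced homology above dimension $1$. Feeding this into the Mayer--Vietoris sequence (with $\st(v)$ contractible), the two terms flanking $\tilde H_\ell(\mathrm{Ind}(G))$ vanish for every $\ell\ge 3$, so $\tilde H_\ell(\mathrm{Ind}(G))=0$ there automatically. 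The whole difficulty is concentrated in dimension two, where the sequence gives $\tilde H_2(\mathrm{Ind}(G))\cong\ker\!\big(\tilde H_1(\mathrm{Ind}(G-N_G[v]))\to\tilde H_1(\mathrm{Ind}(G-v))\big)$, the map being induced by inclusion.

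It therefore suffices to find, for some $v$, a vertex whose link has $\tilde H_1(\mathrm{Ind}(G-N_G[v]))=0$, or more generally to force this connecting map to be injective. This is where claw-freeness enters: it says exactly that every neighborhood $N_G(v)$ has independence number at most $2$, equivalently that every triangle of $G^c$ is a dominating set of $G^c$. I would first use the standard domination (fold) lemma for independence complexes — if $N_G(u)\subseteq N_G(w)$ then $\mathrm{Ind}(G)\simeq\mathrm{Ind}(G-w)$ — to reduce to the neighborhood-incomparable case. There, passing from the link to $\mathrm{Ind}(G-v)$ only adjoins the vertices of $N_G(v)$, and since $N_G(v)$ contains no independent triple, every new $2$-face uses at most two of them; I would combine this with the $C_4$-freeness of $G^c$ (the $2K_2$-freeness of $G$) to show that a $1$-cycle of the link which bounds in $\mathrm{Ind}(G-v)$ already bounds in the link.

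The main obstacle is exactly this last boundary-pushing step: controlling how attaching the claw-free neighborhood $N_G(v)$ changes $H_1$. The guiding intuition is that a single triangle of $G^c$ can dominate all vertices only when $G^c$ is small, so a nontrivial $2$-cycle — morally a flag $2$-sphere, whose minimal instance is the octahedron $=(3K_2)^c$ — cannot survive once induced $C_4$'s are forbidden; turning this into a clean argument is the crux. As a fallback I would attempt a direct constructible-style build-up of $\Delta((1,m_G))$ in the spirit of Theorem \ref{thm:Chordal}, but without a Dirac order the inductive gluing (now expected to leave homology in the top \emph{two} dimensions rather than only the top) is considerably more delicate, which is why I favor the clique-complex route above.
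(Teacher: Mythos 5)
Your reduction to the clique complex of $G^c$ is sound and matches the paper's starting point (the paper invokes Hochster's formula directly; your route through Proposition \ref{prop:lcmHochster} lands in the same place), and your Mayer--Vietoris bookkeeping correctly isolates the difficulty: by induction everything above dimension $2$ vanishes, and the whole problem is to kill $\tilde H_2(\mathrm{Ind}(G))\cong\ker\bigl(\tilde H_1(\mathrm{Ind}(G-N_G[v]))\to\tilde H_1(\mathrm{Ind}(G-v))\bigr)$. But that is precisely the step you do not carry out: you state that you ``would combine'' claw-freeness with the $2K_2$-freeness of $G$ to show that a $1$-cycle of the link which bounds in $\mathrm{Ind}(G-v)$ already bounds in the link, and you yourself call this the crux. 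As written this is a declared intention, not a proof, and it is not clear that single-vertex deletion can be pushed through: the link $\mathrm{Ind}(G-N_G[v])$ sits inside $\mathrm{Ind}(G-v)$ as an induced subcomplex missing all of $N_G(v)$, and nothing you have established controls which $1$-cycles of the link acquire bounding $2$-chains through those extra vertices. So the proposal has a genuine gap.

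The paper closes exactly this gap by choosing a different decomposition: instead of deleting a vertex, it deletes an entire $2$-face $F=\{a,b,c\}$ of the clique complex (a triangle of $G^c$), writing $|\Delta[W]|=(|\Delta[W]|-|F|)\cup\bigl(\cup_{v\in F}\st(v)\bigr)$. The intersection deformation retracts onto a complex $M$ on $W-F$, and the key combinatorial lemma --- proved using precisely the two hypotheses you identified, namely that every vertex has a $G^c$-neighbor in the triangle $F$ (claw-freeness) and that the resulting $4$-cycles in $G^c$ must have chords (no induced $C_4$) --- shows that $M$ and $\Delta[W-F]$ have exactly the same faces of cardinality at least $3$. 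Consequently the inclusion $M\hookrightarrow\Delta[W-F]$ is automatically injective on $H_l$ for all $l\ge 1$ (every bounding $2$-chain of $\Delta[W-F]$ already lies in $M$), which is exactly the injectivity of the connecting map that your vertex-by-vertex approach leaves unproven. To complete your argument you would need an analogous structural statement for the pair $(\lk(v),\mathrm{Ind}(G-v))$, and it is not clear one holds; switching to the triangle decomposition is the cleaner fix.
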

As we have seen, both values of the regularity permitted by this theorem are possible: if $G^c$ is a tree then $\reg(I(G))=2$ and if $G^c=C_n$ for $n\geq 5$ then $\reg(I(G))=3$.
\begin{proof}[Proof of Theorem \ref{thm:regClawFree}]
We shall make use of Hochster formula:
\begin{theorem}(Hochster formula)\label{thm:Hochster}\cite[Corollary 4.9]{StanleyGreenBook}
For a simplicial complex $\Delta$ on vertex set $V$, the Betti numbers of its Stanley-Reisner ideal $I_{\Delta}$ over a field $k$ satisfy for every $i>0$
$$\beta_{i,i+j}(I_{\Delta})=\sum_{W\subseteq V,\ |W|=i+j}\dim_k(\tilde{H}_{j-1}(\Delta[W];k)).$$
\end{theorem}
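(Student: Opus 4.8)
The plan is to compute the multigraded Betti numbers directly from a $\mathbb{Z}^n$-graded free resolution of $k$, and then read off the reduced homology of induced subcomplexes. Writing $k[\Delta]=S/I_\Delta$, recall that $\beta_{i,\mathbf{a}}=\dim_k\Tor_i^S(k[\Delta],k)_{\mathbf{a}}$ and that the coarse Betti number is the sum $\beta_{i,i+j}=\sum_{\deg\mathbf{a}=i+j}\beta_{i,\mathbf{a}}$. I would resolve $k$ by the Koszul complex $K_\bullet=K_\bullet(x_1,\dots,x_n;S)$, so that $\Tor_i^S(k[\Delta],k)=H_i(K_\bullet\otimes_S k[\Delta])$. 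Since the Koszul generator $e_F$ (for $F\subseteq[n]$) sits in multidegree $\mathbf{1}_F$ and $k[\Delta]$ has the monomial $k$-basis $\{x^{\mathbf{b}}:\supp(\mathbf{b})\in\Delta\}$, the strand of $K_\bullet\otimes_S k[\Delta]$ in a fixed multidegree $\mathbf{a}$ is the complex whose degree-$i$ part has basis $\{e_F:\ |F|=i,\ F\subseteq\supp(\mathbf{a}),\ \supp(\mathbf{a}-\mathbf{1}_F)\in\Delta\}$, with differential induced by the Koszul differential.

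First I would reduce to squarefree multidegrees. If some coordinate satisfies $a_v\geq 2$, then $v\in\supp(\mathbf{a}-\mathbf{1}_F)$ for every admissible $F$, so membership of $\supp(\mathbf{a}-\mathbf{1}_F)$ in $\Delta$ is unaffected by whether $v\in F$. Thus the simplicial complex $K^{\mathbf{a}}=\{F\subseteq\supp(\mathbf{a}):\supp(\mathbf{a}-\mathbf{1}_F)\in\Delta\}$ governing this strand is a cone with apex $v$, hence acyclic, and the strand is exact. Therefore $\beta_{i,\mathbf{a}}=0$ unless $\mathbf{a}=\mathbf{1}_W$ is squarefree, in which case $\deg\mathbf{a}=|W|$; in particular only subsets $W$ with $|W|=i+j$ can contribute to $\beta_{i,i+j}$.

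Next I would identify the squarefree strand with the cohomology of $\Delta[W]$. For $\mathbf{a}=\mathbf{1}_W$ the governing complex is $K^W=\{F\subseteq W:\ W\setminus F\in\Delta[W]\}$, and the strand is its reduced (augmented) chain complex, with $e_F$ in chain-degree $|F|-1$ and the Koszul differential equal, up to the standard sign, to the simplicial boundary; hence $\Tor_i^S(k[\Delta],k)_W\cong\tilde{H}_{i-1}(K^W;k)$. The complementation bijection $F\mapsto W\setminus F$ reverses inclusion and carries the face poset of $K^W$ onto that of $\Delta[W]$, converting boundary maps into coboundary maps. This is exactly combinatorial Alexander duality on the vertex set $W$, and it yields $\tilde{H}_{i-1}(K^W;k)\cong\tilde{H}^{|W|-i-1}(\Delta[W];k)$. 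Over a field cohomology and homology have equal dimension, so $\beta_{i,\mathbf{1}_W}=\dim_k\tilde{H}_{|W|-i-1}(\Delta[W];k)$. Summing over all $W$ with $|W|=i+j$ and substituting $|W|-i-1=j-1$ gives the asserted identity.

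The routine parts are the monomial basis of $k[\Delta]$ and the substitution $|W|-i-1=j-1$; the delicate points, which is where I expect the real work, are threefold. First, the cone/acyclicity argument that concentrates the Betti numbers in squarefree degrees must be set up carefully, since it is what reduces the general $\mathbb{Z}^n$-graded computation to induced subcomplexes. Second, the signs in the Koszul differential must be matched precisely to the simplicial (co)boundary, so that the chain-level identifications are genuine isomorphisms of complexes and not merely of graded vector spaces. Third, the complementation $F\mapsto W\setminus F$ --- equivalently the invocation of Alexander duality on $W$ --- together with the conventions at the empty face (so that the degree $\tilde{H}_{-1}$ and the singleton cases come out correctly) is the step most prone to off-by-one errors and deserves the most attention.
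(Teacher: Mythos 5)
The paper offers no proof of this statement to compare against: it is imported verbatim as a quoted result, cited to \cite[Corollary 4.9]{StanleyGreenBook}. What you have written is the standard proof (essentially Hochster's original argument, as in Miller--Sturmfels), and it is correct: the Koszul strand description, the toggling homotopy $e_F\mapsto \pm e_{F\cup\{v\}}$ that kills every strand with some $a_v\geq 2$, the complementation $F\mapsto W\setminus F$ identifying the squarefree strand with the reduced cochain complex of $\Delta[W]$, and the bookkeeping $\tilde{H}_{i-1}\cong\tilde{H}^{|W|-i-1}(\Delta[W];k)$ all check out, including the conventions at the empty face. Two points deserve attention. First, the families $K^{\mathbf{a}}=\{F\subseteq\supp(\mathbf{a}):\supp(\mathbf{a}-\mathbf{1}_F)\in\Delta\}$ and $K^W$ are closed under taking supersets, not subsets, so they are \emph{not} simplicial complexes; the complexes you use are nonetheless well defined (they are relative chain complexes of the pair consisting of the full simplex on $\supp(\mathbf{a})$ and the subcomplex of non-members --- in the squarefree case, the Alexander dual of $\Delta[W]$), the cone homotopy and the complementation argument apply verbatim, but the phrase ``the simplicial complex $K^{\mathbf{a}}$'' should be corrected. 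Second, a conventions issue you resolve only implicitly: your opening line defines $\beta_{i,\mathbf{a}}$ via $\Tor_i^S(k[\Delta],k)$, so what you actually prove is the formula for the quotient $S/I_\Delta$, namely $\beta_{i,\mathbf{1}_W}(k[\Delta])=\dim_k\tilde{H}_{|W|-i-1}(\Delta[W];k)$. That is the reading under which the stated identity (with $\tilde{H}_{j-1}$, $|W|=i+j$, and the restriction $i>0$) is correct, and it matches both Stanley's Corollary 4.9 and the way the formula is applied in the proof of Theorem \ref{thm:regClawFree}. If one instead read $\beta_{i,i+j}(I_\Delta)$ as the Betti numbers of the module $I_\Delta$ itself --- the convention used for $I$ elsewhere in the paper, cf.\ Theorem \ref{thm:GPW} --- there would be an index shift and the right-hand side would be $\tilde{H}_{j-2}(\Delta[W];k)$; your proof silently makes the correct choice, but you should state explicitly that $\beta_{i,i+j}$ here refers to $k[\Delta]=S/I_\Delta$.
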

Thus, to prove the theorem we need to show that for every $l>1$ and every $W\subseteq V(G)$, $\tilde{H}_{l}(\Delta[W];k))=0$ where $\Delta[W]$ is the clique complex on the induced graph $G^c[W]$.

If $\Delta[W]$ has no $2$-dimensional faces, this is obvious. Assume that $F=\{a,b,c\}$ is a $2$-face of $\Delta[W]$. Decompose the geometric realization $|\Delta[W]|$ as a union of two open spaces $\Delta[W]=(|\Delta[W]|-|F|)\cup (\cup_{v\in F}\st(v))$ where $\st(v)$ is the open star of $v$ in $\Delta[W]$. Let $L=(|\Delta[W]|-|F|)\cap (\cup_{v\in F}\st(v))$.
Then $|\Delta[W]|-|F|$ retracts on $\Delta[W-F]$ and by induction on the number of vertices all of its homology groups in dimension $>1$ vanish.
Note that $\cup_{v\in F}\st(v)$ is contractible.
The intersection $L$ is homotopic to $\cup_{v\in F}\lk(v) - \partial F$, where $\partial F$ is the boundary of the simplex with vertex set $F$, and in turn is homotopic to the complex $M$ generated by the faces $T\subseteq (W-F)$ such that one of the sets $T\cup \{v\}$ where $v\in F$ is in $\Delta[W]$.

We now show that if $|T|\geq 3$ then $T\in M$ iff $T\in \Delta[W-F]$, and hence, again by induction, the homology groups of $M$ and hence of $L$ in dimension $>1$ vanish.
Clearly $M\subseteq \Delta[W-F]$. Let $T\in \Delta[W-F]$, $|T|\geq 3$ and assume by contradiction that $T\notin M$, i.e. for any $v\in F$ the number of its neighbors in $G^c$ among $T$ is smaller than $|T|$; w.l.o.g. let $a$ maximize this number among the elements of $F$, and denote this number by $t$ and the neighbors of $v$ in $G^c$ among $T$ by $T(v)$.
Let $u\in T-T(a)$. By claw freeness $u$ has a neighbor in $F$, and w.l.o.g. let $b$ be such neighbor. We will show now that $T(a)\uplus\{u\}\subseteq T(b)$, a contradiction to the choice of $a$: for each $w\in T(a)$, look at the $4$-cycle $(a,b,u,w)$ in $G^c$ and conclude that $\{w,b\}\in G^c$, hence $w\in T(b)$.

By Mayer-Viatoris long exact sequence over $\mathbb{Z}$ we get for $i>1$
$$0=H_i(|\Delta[W]|-|F|)\oplus H_i(\bigcup_{v\in F}\st(v))\rightarrow H_i(\Delta[W])\rightarrow H_{i-1}(L) \stackrel{j_*}{\rightarrow} H_{i-1}(|\Delta[W]|-|F|)
,$$
thus we will be done if we show that the connecting homomorphism $j_*$ is injective.
This will follow from showing that the diagram
$$\begin{array}{ccccccccc}\label{commDiagram}
   H_l(L) & \stackrel{j_*}{\rightarrow} & H_l(|\Delta[W]|-|F|)  \\
   \cong \downarrow & &\ \ \downarrow \cong \\
   H_l(M) & \stackrel{i_*}{\rightarrow} & H_l(\Delta[W-F])\\
   \end{array}$$
commutes for any $l$, where $i_*$ is induced by inclusion. Indeed, we already showed that $i_*$ is injective for $l\geq 1$.
Commutativity follows from taking a retract $|\Delta[W]|-|F| \to \Delta[W-F]$ whose restriction to $L$ is a retract onto $M$; this is easy to do, we omit the details.
\end{proof}

Denote by $L_{i}$ the restriction of the lcm lattice $L=L(M)$ to monomials of degree at least $i$ (not to be confused with the rank of them as elements in the poset).
For a simplicial complex $\Gamma$ let $\alpha(\Gamma)$
be the maximal number such that $\tilde{H}_{\dim(\Gamma)-\alpha(\Gamma)}(\Gamma)\neq 0$, and set
$\alpha(\Gamma)=0$ if $\Gamma$ is acyclic.
For a monomial $m$ in $L$ let $\alpha(m)=\alpha((1,m)):=\alpha(\Delta((1,m)))$.
Let $\alpha(M):=\max_{1\neq m\in L(M)}\{ \alpha(m) \}$. If $M$ is generated by monomials of degree $r$ then $\reg(M)=r + \alpha(M)$ (use (1), or see \cite[Proposition 2.3]{Nevo-Peeva}). Also, denote $\supp(m)=\{v: x_v | m\}$.

The following proposition was suggested to me by Irena Peeve, generalizing a result of Phan \cite{Phan} who proved the case where $i=3$ and $M$ has a linear resolution. It will be used in the proof of Theorem
\ref{thm:lin.res.clow.square}(1).

\begin{proposition}\label{prop:genPhan}.
Let $M$ be a monomial ideal minimally generated by monomials of degree $s\ge 2$. Suppose that its lcm-lattice $L(M)$ is graded
and except for the minimum, the rank function is given by $\hbox{rank}(m)=\deg(m)-s+1$ ($m$ is a monomial).  Suppose that there exist monomials of degree $s+1$ in $L(M)$, and let
$Q$ be the monomial ideal generated by all such monomials, that is, $Q$
is generated by the multidegrees of the first minimal syzygies of $M$.
Then for any $m\in Q$, $\alpha((1,m)_{L(Q)})\le \max(0, \alpha((1,m)_{L(M)}-1)$.
In particular,
$$\reg(Q)\le \max(s+1, \reg(M)) .$$
\end{proposition}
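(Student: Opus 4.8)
The plan is to establish the pointwise inequality on $\alpha$ for a fixed $m$ and then pass to the maximum over $m\in L(Q)$; the regularity bound follows by maximizing, using $\reg(M)=s+\alpha(M)$ together with $(s+1)+\max(0,\alpha(M)-1)=\max(s+1,s+\alpha(M))$ (and, to avoid assuming $L(Q)$ is graded, one may instead read off $\reg$ directly as $\max_w(\deg w-1-j_{\min}(\Delta((1,w))))$, where $j_{\min}$ denotes the lowest nonvanishing reduced homology degree). So fix $m\in Q$, write $d=\deg(m)$, let $P=(1,m)_{L(M)}$, let $A\subseteq P$ be its set of atoms (the degree-$s$ generators of $M$ dividing $m$, i.e.\ the rank-one elements), and let $P\setminus A$ be the subposet of elements of degree $\ge s+1$. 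Here $\dim\Delta((1,m)_{L(M)})=d-s-1$ and $\dim\Delta(P\setminus A)=d-s-2$, while $\dim\Delta((1,m)_{L(Q)})\le d-s-2$.

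First I would identify $\Delta((1,m)_{L(Q)})$ up to homotopy with $\Delta(P\setminus A)$. Every $w\in L(Q)$ is the join of the generators of $Q$ (degree-$(s+1)$ elements) lying below it, so the map $c\colon P\setminus A\to P\setminus A$, $c(y)=\bigvee\{z\in G(Q):z\le y\}$, is well defined: its value is $\neq 1$ since $y$ has degree $\ge s+1$ and $L(M)$ is graded, so some rank-two element sits below $y$. The map $c$ is order preserving, satisfies $c(y)\le y$, is idempotent, and has image exactly $(1,m)_{L(Q)}$; by the standard closure-operator lemma for posets this yields a homotopy equivalence $\Delta(P\setminus A)\simeq\Delta((1,m)_{L(Q)})$, so the two complexes share the same $j_{\min}$.

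Next I would delete the atoms and run the long exact sequence of the pair $(\Delta(P),\Delta(P\setminus A))$. As the atoms are minimal and pairwise incomparable, every face of $\Delta(P)$ outside $\Delta(P\setminus A)$ has a unique atom at its bottom, so the relative chain complex splits and
\[
H_n(\Delta(P),\Delta(P\setminus A);k)\ \cong\ \bigoplus_{a\in A}\tilde H_{n-1}(\Delta((a,m)_{L(M)});k).
\]
The key lemma is that each open upper interval $(a,m)_{L(M)}$ has reduced homology concentrated in its top degree $d-s-2$. I would prove this by showing $[a,m]$ is semimodular: for $x,y\neq 1$ the rank function $\rank=\deg-s+1$ obeys $\rank(x)+\rank(y)-\rank(x\vee y)-\rank(x\wedge y)=\deg\gcd(x,y)-\deg(x\wedge y)\ge 0$, since in an lcm-lattice $x\wedge y$ divides $\gcd(x,y)$. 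As $L(M)$ is graded this makes every $[a,m]$ with $a\neq 1$ semimodular, hence shellable by \cite[Theorem 3.1]{Bjorner-shellableCMposets} and so Cohen--Macaulay, giving $\tilde H_j(\Delta((a,m)_{L(M)}))=0$ for $j<d-s-2$. Thus the relative homology above is concentrated in the single degree $n=d-s-1$.

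Finally I would chase degrees. Because the relative terms vanish outside $n=d-s-1$, the sequence gives isomorphisms $\tilde H_n(\Delta(P\setminus A))\cong\tilde H_n(\Delta((1,m)_{L(M)}))$ for all $n\le d-s-3$, while $\Delta(P\setminus A)$ has no homology above $d-s-2$. Comparing the lowest nonvanishing degrees $j_1$ of $\Delta(P\setminus A)\simeq\Delta((1,m)_{L(Q)})$ and $j_0$ of $\Delta((1,m)_{L(M)})$ then forces $j_1\ge j_0$ whenever $\alpha((1,m)_{L(M)})\ge 1$, and combined with $\dim\Delta((1,m)_{L(Q)})\le d-s-2$ this is precisely $\alpha((1,m)_{L(Q)})\le\max(0,\alpha((1,m)_{L(M)})-1)$; the identical bookkeeping, read through $\reg=\max_w(\deg w-1-j_{\min})$, yields $\reg(Q)\le\max(s+1,\reg(M))$. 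The main obstacle is the key lemma — pinning the homology of the upper intervals to their top degree — on which the collapse of the long exact sequence rests; it is secured by the rank-submodularity computation forced by gradedness. The remaining difficulty is purely the index bookkeeping that converts the homological shift into the dimension-sensitive quantity $\alpha$, in particular accounting for the possible drop in dimension from $\Delta(P\setminus A)$ to its retract $\Delta((1,m)_{L(Q)})$.
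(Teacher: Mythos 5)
Your proof is correct and follows essentially the same route as the paper's: delete the atoms of $(1,m)_{L(M)}$, observe that each upper interval $[a,m]$ is semimodular hence shellable so its homology sits in top degree, and conclude via an exact sequence that the homology of the atom-deleted complex can differ from that of $\Delta((1,m)_{L(M)})$ only near the top dimension. The only (cosmetic) differences are that you use the long exact sequence of the pair with an explicit splitting of the relative chains where the paper uses Mayer--Vietoris with the open stars of the atoms, and that you justify the identification of $(1,m)_{L(Q)}$ with the atom-deleted interval via a closure operator, a point the paper asserts without comment.
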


\begin{proof}
Fix a monomial $m\in L(M)$. Let $A$ be the set of atoms in $(1,m)_{L(M)}$, $\Delta=\Delta((1,m)_{L(M)})$, $\Gamma=\Delta-A$, and $\Lambda$ the induced subcomplex of $\Delta$ on the complement of $A$, i.e. $\Lambda=\Delta((1,m)_{L(Q)})$.
Then $\Gamma$ deformation retracts on $\Lambda$ and $\dim(\Lambda)=\dim(\Gamma)-1$.

The Mayer-Vietoris sequence gives
$$\tilde{H_i}\bigl(\, \uplus_{a\in A}\lk(a,\Delta)\bigr)\longrightarrow\
\tilde{H_i}(\Gamma)\oplus \tilde{H_i}\bigl(\,\uplus_{a\in A}\st(a,\Delta)\bigr)\longrightarrow
\tilde{H_i}(\Delta)\, .$$

For any $a\in A$,  the link $\lk(a,\Delta)=\Delta((a,m))$ is shellable (by \cite[Theorem 3.1]{Bjorner-shellableCMposets} again). Therefore, we get that $\alpha(\lk(a,\Delta))=0$,
hence $\alpha(\Gamma)\leq \max(1,\alpha(\Delta))$. Now,
the assertion follows as $\dim(\Lambda)=\dim(\Gamma)-1$.
\end{proof}

\begin{corollary}\label{lem:L_i}
If $G^c$ has no induced $C_4$, then for every $m\in L=L(I_G)$ and $i\geq 2$
$$\alpha((1,m)_{L_{i}})\leq \max(0, \alpha((1,m)_{L})-i+2).$$
\end{corollary}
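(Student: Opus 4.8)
The plan is to prove the bound by induction on $i$, at each stage removing one degree-layer of minimal generators and applying Proposition \ref{prop:genPhan}. Because $I_G$ is generated in degree $2$, every element of $L=L(I_G)$ has degree $\geq 2$, so $L_2=L$ and the case $i=2$ is the trivial equality. For the inductive step I fix $i\geq 3$ and first establish the single-step inequality
$$\alpha((1,m)_{L_i})\ \leq\ \max\bigl(0,\ \alpha((1,m)_{L_{i-1}})-1\bigr),$$
valid for every $m\in L$ (when $\deg(m)<i$ the interval $(1,m)_{L_i}$ is empty and the inequality is vacuous, so one may assume $\deg(m)\geq i$). Composing this with the induction hypothesis $\alpha((1,m)_{L_{i-1}})\leq\max(0,\alpha((1,m)_L)-(i-3))$ and using the elementary identity $\max(0,\max(0,x)-1)=\max(0,x-1)$ gives $\alpha((1,m)_{L_i})\leq\max(0,\alpha((1,m)_L)-(i-2))$, which is the claim for $i$.

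For the single-step inequality I would apply Proposition \ref{prop:genPhan} to the truncated lattice $L_{i-1}$, in the role of $L(M)$ with $s=i-1$; its atoms are the degree-$(i-1)$ monomials of $L$ dividing $m$, and $L_i$ plays the role of $L(Q)$. What must be checked is that the hypotheses of Proposition \ref{prop:genPhan} are inherited by $L_{i-1}$. Gradedness is automatic: truncating the graded lattice $L$ (where $\rank(m)=\deg(m)-1$ by Proposition \ref{prop:gradedLCM}) below degree $i-1$ and reattaching the minimum preserves gradedness and shifts ranks uniformly, so on $L_{i-1}$ one has $\rank(m)=\deg(m)-(i-1)+1$, exactly as required. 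Moreover, for any atom $a$ of $(1,m)_{L_{i-1}}$ the interval $[a,m]$ computed in $L_{i-1}$ coincides with $[a,m]$ in $L$ (every element between $a$ and $m$ already has degree $\geq i-1$), and this interval is semimodular since $a\neq 1$, as noted after Proposition \ref{prop:gradedLCM}; hence the link $\Delta((a,m))$ is shellable. With these two facts the Mayer--Vietoris computation in the proof of Proposition \ref{prop:genPhan} applies verbatim and yields the displayed single-step bound.

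Should one prefer to cite Proposition \ref{prop:genPhan} literally rather than re-run its argument, it suffices to identify $L_{i-1}$ with the lcm-lattice of the ideal $Q_{i-1}$ generated by the degree-$(i-1)$ monomials of $L$ (and, correspondingly, $L_i$ with $L(Q_i)$, noting that the degree-$i$ monomials of $L$ are precisely the multidegrees of the first syzygies of $Q_{i-1}$). The only content here is that every monomial $m'\in L$ with $\deg(m')\geq i-1$ is a join of degree-$(i-1)$ monomials of $L$: for each edge $e\le m'$ the graded, indeed semimodular, interval $[e,m']$ contains an element $f_e$ of degree $i-1$ with $e\le f_e\le m'$, and since all monomials of $L$ are squarefree, $\bigvee_e f_e$ has the same support as $m'$ and therefore equals $m'$; the reverse inclusion $L(Q_{i-1})\subseteq L_{i-1}$ is clear.

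The computation is in this sense routine once Proposition \ref{prop:genPhan} is in hand; the step requiring the most care is verifying that its hypotheses genuinely persist under truncation---that $L_{i-1}$ stays graded with the correct rank function and that the upper intervals $[a,m]$ remain semimodular---since this is precisely where the assumption that $G^c$ has no induced $C_4$ enters, through Proposition \ref{prop:gradedLCM}. One must also track the $\max(0,\cdot)$ truncations correctly through the induction, as they encode the fact that $\alpha$ never drops below $0$.
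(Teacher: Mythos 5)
Your proof is correct and follows essentially the same route as the paper, whose proof of this corollary is just the instruction to combine Propositions \ref{prop:gradedLCM} and \ref{prop:genPhan}: one iterates Proposition \ref{prop:genPhan} from $L_2=L$ up to $L_i$, which is exactly your induction. Your verification that the hypotheses persist under truncation (that $L_{i-1}$ is the lcm-lattice of the ideal generated by the degree-$(i-1)$ monomials, graded with the shifted rank function, with semimodular upper intervals) is precisely the content the paper leaves implicit, and your bookkeeping of the $\max(0,\cdot)$ truncations is right.
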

\begin{proof}
Combine Propositions \ref{prop:gradedLCM} and \ref{prop:genPhan}.
\end{proof}

The following theorem is a restatement of Theorem \ref{thm:lin.res.clow.square}(1).

\begin{theorem}\label{thm:eregI^2(ClawFree)}
If $G\in \mathcal{CF}$ then $\reg(I(G)^2)=4$.
\end{theorem}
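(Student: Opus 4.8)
The plan is to prove $\reg(I(G)^2)\le 4$; the reverse inequality is automatic since $I(G)^2$ is generated in degree $4$. By Proposition \ref{prop:gradedLCM} (with $k=2$) the lattice $L(I(G)^2)$ is pure and graded with $\rank(m)=\deg(m)-3$, so $\dim\Delta((1,m))=\deg(m)-5$ for every $m\in L(I(G)^2)$ with $m\neq 1$. Writing $\reg(I(G)^2)=4+\alpha(I(G)^2)$, it therefore suffices to show $\alpha((1,m))=0$ for every such $m$, i.e.\ that the reduced homology of $\Delta((1,m))$ is concentrated in the top dimension $\deg(m)-5$. Throughout I will use that $\alpha(I(G))\le 1$, which is a restatement of Theorem \ref{thm:regClawFree}, and that $G\in\mathcal{CF}$ means precisely that $G$ is claw free and has no induced $2K_2$.

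First I would dispose of squarefree $m=x_S$. The key point is a purely combinatorial identity: for $|T|\ge 4$ one has $x_T\in L(I(G)^2)$ if and only if $x_T\in L_4$, the restriction of $L(I(G))$ to monomials of degree $\ge 4$. Indeed $x_T\in L_4$ says that $G[T]$ has no isolated vertex, while $x_T\in L(I(G)^2)$ says that $T$ is a union of pairs of disjoint edges. On four vertices the only graph with no isolated vertex and no perfect matching is the claw, which is excluded by hypothesis; and for $|T|\ge 4$ a graph with no isolated vertex then has matching number $\ge 2$, so every vertex can be placed into a four-set carrying two disjoint edges. Hence the two conditions agree, and the open intervals $(1,x_S)_{L(I(G)^2)}$ and $(1,x_S)_{L_4}$ consist of the same monomials with the same divisibility order, so their order complexes coincide. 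Now Corollary \ref{lem:L_i} with $i=4$ gives $\alpha((1,x_S)_{L_4})\le\max(0,\alpha((1,x_S)_{L(I(G))})-2)\le\max(0,1-2)=0$, settling the squarefree case.

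The remaining, and genuinely harder, case is non-squarefree $m$, where $L(I(G)^2)$ below $m$ also involves the generators $x_a^2x_bx_c$ and $x_a^2x_b^2$ coming from pairs of edges sharing a vertex, and the clean identification with $L_4$ breaks down because the radical interval $(1,x_{\supp(m)})$ sits in a strictly lower dimension. Here I would induct on the excess $\deg(m)-|\supp(m)|$, with the squarefree case as the base. Fixing a variable $x_v$ with $x_v^2\mid m$, I would split $\Delta((1,m))$ by a Mayer--Vietoris decomposition into the subcomplex spanned by chains meeting the sub-poset of elements divisible by $x_v^2$ and its complement, arranging (using the inductive hypothesis, the semimodularity of the relevant intervals, and the shellability of atom-links exactly as in Proposition \ref{prop:genPhan}) that both pieces and their intersection have possibly nonvanishing homology only one step below the top. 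The claw free and $2K_2$-free hypotheses enter just as in the proof of Theorem \ref{thm:regClawFree}, forcing the local structure around $v$ so that no new sub-top homology is created. This rank bookkeeping, together with the verification that the connecting homomorphisms behave, is the \emph{main obstacle}; the small intervals of degrees $4$ and $5$ must also be checked directly as extra base cases, where each open interval turns out to be void or contractible, so that $\alpha=0$ holds trivially.

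Combining the two cases yields $\alpha((1,m))=0$ for every $m\in L(I(G)^2)$, hence $\alpha(I(G)^2)=0$ and $\reg(I(G)^2)=4$. Since $I(G)^2$ is generated in degree $4$, this is exactly the assertion that $I(G)^2$ has a linear resolution, which is Theorem \ref{thm:lin.res.clow.square}(1).
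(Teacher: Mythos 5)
Your reduction to showing $\alpha((1,m))=0$ for all $m\in L(I(G)^2)$ is the paper's framework, and your squarefree case is essentially the paper's Case~1 ($m_{sf}=m$): the identification of $(1,m]_{L(I(G)^2)}$ with $(1,m]_{L_4}$ via claw freeness (the paper's Lemma~\ref{lem:sf-partReduction}, which only needs the degree-$4$ atoms to coincide since both posets are join-generated by them) followed by Corollary~\ref{lem:L_i} and Theorem~\ref{thm:regClawFree}. That half is fine.

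The non-squarefree case, however, is where the entire difficulty of the theorem lives, and you have not proved it --- you explicitly defer the ``rank bookkeeping'' and the behavior of the connecting homomorphisms as the ``main obstacle.'' Moreover, the plan you sketch (induct on $\deg(m)-|\supp(m)|$, split by divisibility by $x_v^2$, and argue that ``no new sub-top homology is created'') cannot work as stated, because sub-top homology \emph{is} created at intermediate stages of any such decomposition. Concretely, elements such as $x=a^2b^2c^2$ have $|\supp(x)|=3$, no squarefree monomial below them in $(1,m)$, and upper open interval $\Delta((x,m)_{\neg 2})$ equal to a sphere; the paper shows that adding such $x$ to the filtration produces nonzero homology in codimension~$2$ ($\tilde{H}_{\deg(m)-7}(\Delta(P_{\deg(m)-6}))\cong\mathbb{Z}^k$ with $k$ possibly positive). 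The actual content of the paper's Case~2 is the mechanism that kills these classes afterwards: Lemma~\ref{lem:NoPureSquare} first reduces to the subposet $(1,m)_{\neg 2}$ of joins of non-square atoms, and then Lemma~\ref{lem:forPrism} (the map $y\mapsto y/a$) is used to build an explicit prism/sphere in $\Delta(P_{\deg(m)-6})$ representing the offending class and an explicit cone with apex $x/a$ over it in $\Delta(P_{\deg(m)-5})$, showing the class maps to zero; the same is then repeated one level up for $x=a^2b^2c$. Your proposal contains no analogue of this killing step, so the gap is not a matter of routine verification but a missing idea.
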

\begin{proof}
By (1) and Proposition \ref{prop:gradedLCM} we need to show that
$\alpha(m)=0$ holds for any $m\in L^2:=L(I(G)^2)$.

If $|\supp(m)|\leq 3$ then one easily checks that $\alpha(m)=0$ (note that any variable appears in degree at most $2$ in $m$). So assume that $|\supp(m)|\geq 4$. As $G[\supp(m)]$ is claw free, it contains two disjoint edges, and their product divides $m$. Let $m_{sf}$ be the (nonempty) join of squarefree atoms in $(1,m]$.
We distinguish two cases.

Case 1: $m_{sf}=m$.
We need the following lemma.
\begin{lemma}\label{lem:sf-partReduction}
Let $G\in \mathcal{CF}$ and $m\in L^2:=L(I(G)^2)$ be squarefree. Let $L:=L(I(G))$.
Then $(1,m]_{L^2}=(1,m]_{L_{4}}$.
\end{lemma}
\begin{proof}
As in both posets the elements are all the joins of monomial of degree $4$, it is enough to show that a monomial of degree $4$ is in $(1,m]_{L^2}$ iff it is in $(1,m]_{L_{4}}$.
Let $m'$ be a monomial of degree $4$. If $m'\in (1,m]_{L^2}$ clearly $m'\in (1,m]_{L_{4}}$. Conversely, if $m'\in (1,m]_{L_{4}}$ then $G[\supp(m')]$ contains two (not induced!) disjoint edges as $G$ is claw free, and their product shows $m'\in (1,m]_{L^2}$.
\end{proof}
Back to the proof of Case 1, combining Lemma \ref{lem:sf-partReduction}, Corollary \ref{lem:L_i} and Theorem \ref{thm:regClawFree} gives $\alpha((1,m)_{L^2})\leq \max (0,\, \alpha((1,m)_L)-2) = 0$ as desired.

\

Case 2: $m_{sf}\neq m$.
For an induced subposet $L$ of an lcm lattice generated by monomials of degree $4$, denote by $L_{\neg 2}$ the restriction of $L$ to the joins of atoms which are not squares, i.e. not of the form $(ab)^2$. First we show that:
\begin{lemma}\label{lem:NoPureSquare}
For any $m\in L(I(G)^2)$, $\alpha((1,m))\leq \alpha((1,m)_{\neg 2})$.
\end{lemma}
We postpone its proof for later.
To conclude in Case 2, it is enough to show that
$\alpha((1,m)_{\neg 2})=0$.

Let $P_0=(1,m_{sf}]$ and for $i>0$ let $P_i$ be the restriction of  $(1,m)_{\neg 2}$ to $P_0$ union with the elements of degree at least $\deg(m)-i$ in $(1,m)_{\neg 2}$. Then $P_0\subseteq P_1\subseteq...\subseteq P_{\deg(m)-4}=(1,m)_{\neg 2}$. Note that $\Delta(P_0)$ as acyclic as it is a cone.

We will show first that $\Delta(P_i)$ is acyclic for $0\leq i\leq \deg(m)-7$.
Let $i>0$ and $x\in P_i - P_{i-1}$. Then $$\lk(x,\Delta(P_i))=\Delta((x,m)_{\neg 2})*\Delta((1,x_{sf}])$$
where $\Delta((1,x_{sf}])=\emptyset$ if $x_{sf}$ does not exist. However, recall that claw freeness guarantees that $x_{sf}$ exists if $|\supp(x)|\geq 4$ which is the case if $\deg(x)>6$. If $x_{sf}$ exists then  $\lk(x,\Delta(P_i))$ is acyclic.

Let $1\le i\le \deg(m)-7$. Order the vertices in $P_i-P_{i-1}$, say $x_1, x_2,..., x_j$. Let $P_{x_l}$ be the induced poset of $(1,m)$ on $P_{i-1}\cup \{x_1,...,x_{l}\}$ and $\Delta(P_{x_l})$ be its order complex. Define $P_{x_0}:=P_{i-1}$. Let $1\le l \le j$ and by induction we assume that $\Delta(P_{x_{l-1}})$ is acyclic. Consider the Mayer-Vietoris long exact sequence for the union
$\Delta(P_{x_l})= (\Delta(P_{x_l})-\{x_l\}) \cup \st(x_l, \Delta(P_{x_l}))$.
Note that
$\Delta(P_{x_l})-\{x_l\}$ is homotopic to $\Delta(P_{x_{l-1}})$,
$\st(x_l, \Delta(P_{x_l}))$ is acyclic, and their intersection is homotopic to
$\lk(x_l,\Delta(P_i))$ which is a cone. We conclude that
$\Delta(P_{x_l})$ is acyclic too.


Thus, $\Delta(P_{deg(m)-7})$ is acyclic.
For $x_l\in P_{deg(m)-6}-P_{deg(m)-7}$, if $(x_l)_{sf}$ exists then as we showed before, adding it to the poset $P_{x_{l-1}}$ will not effect the homology. If
$(x_l)_{sf}$ does not exist then $\lk(x_l, \Delta(P_{x_{l-1}})) =
\Delta( (x_l,m)_{\neg 2})$ which is shellabe (as $[x_l,m]_{\neg 2}$ is semimodular and see Section 2), hence adding $x_l$ to $P_{x_{l-1}}$ may create nontrivial homology only in dimension $\dim(\Delta(1,m)) -3$. Thus, the Mayer-Vietoris sequence shows that $\Delta(P_{x_l})$ may have nonzero homology only in dimension $\dim(\Delta(1,m))-2 = \deg(m)-7$.
Moreover, it shows that
$\tilde{H}_{\deg(m)-7}(\Delta(P_{\deg(m)-6}))\cong \mathbb{Z}^k$ where $k$ is the number of monomials $x\in P_{\deg(m)-6}\setminus P_{\deg(m)-7}$ such that $\Delta((x,m))$ has nonvanishing top dimensional homology.

Note that for such $x$ $\Delta((x,m))$ is a pseudomanifold (indeed every chain $x<c_1<...<c_{\deg(m)-\deg(x)-2}<m$ is contained in at most two maximal chains in $[x,m]_{\neg 2}$). It follows that for $x$ as above $\Delta((x,m)_{\neg 2})$ is a sphere.
As a representative of the homology induced by $x$ we need to find a cycle in $\Delta(P_{\deg(m)-6})$ (actually we will find a sphere) whose support contains the ball $\Delta([x,m)_{\neg 2})$. For this, we need the following lemma.

\begin{lemma}\label{lem:forPrism}
Let $x\in\{a^2b^2c, a^2b^2c^2\}$ and $y\in (x,m)_{\neg 2}$ where $a,b,c$ are different variables.
Then $y/a \in (x/a,m)_{\neg 2}$.
\end{lemma}
\begin{proof}
Note that $G[\supp(x)]$ is a triangle, hence $x/a\in (1,m)_{\neg 2}$.
If $|\supp(y)|=3$ then $y=a^2b^2c^2$ and the claim is clear. So assume $|\supp(y)|>3$, and as we argued before, claw freeness guarantees the existence of $y_{sf}$. If a variable $v\neq a$ appears in degree $2$ in $y$, then there are two different edges containing the vertex $v$ in $G[\supp(y)]$, and their product, denoted by $e(v)$ is in $(1,m)_{\neg 2}$. Thus, the join of $y_{sf}$ with all the $e(v)$ for $v$ as above equals $y/a$ and is in $(x/a,m)_{\neg 2}$.
\end{proof}

Back to the proof of Theorem \ref{thm:eregI^2(ClawFree)}, we need to consider $x\in P_{deg(m)-6}-P_{deg(m)-7}$ with $\Delta((x,m))$ not acyclic and such that $|\supp(x)|\leq 3$, hence $x=a^2b^2c^2$.
By Lemma \ref{lem:forPrism}, $\{y/a : y\in (x,m)_{\neg 2}\} \subseteq P_{\deg(m)-6}$. The join of these $y/a$ is $m/a \in P_{\deg(m)-6}$.
For each facet $\{c_1<...<c_l\}$ of $\Delta((x,m)_{\neg 2})$, triangulate the prism with top $\{c_1<...<c_l\}$ and bottom $\{c_1/a<...<c_l/a\}$ in the standard way using the facets $\{c_1/a<...<c_i/a<c_i<...<c_l\}$. The union of all these prisms and $\Delta([x,m)_{\neg 2})$ and $\Delta((x/a,m/a]_{\neg 2})$ is a sphere of codimension $2$ in $\Delta(P_{\deg(m)-6})$ representing the nontrivial homology induced by $x$.

On the other hand, the cone over this sphere with apex $x/a$ shows that the map
$$\tilde{H}_{\deg(m)-7}(\Delta(P_{\deg(m)-6})\longrightarrow \tilde{H}_{\deg(m)-7}(\Delta(P_{\deg(m)-5}),$$
induced by the Mayer-Vietoris sequence
for the union
$$\Delta(P_{\deg(m)-5})=(\Delta(P_{\deg(m)-5})- (P_{\deg(m)-5}-P_{\deg(m)-6})) \cup (\bigcup_{x\in P_{\deg(m)-5}-P_{\deg(m)-6}}\st(x, \Delta(P_{\deg(m)-5})),$$
 is zero.
Arguing as before with the Mayer-Vietoris sequence, $\Delta(P_{\deg(m)-5})$
may have nonvanishing homology only in dimension $\deg(m)-6$ (i.e. codimension $1$), and by Lemma \ref{lem:forPrism} applied to $x=a^2b^2c$ and the above argument, this homology maps to zero in $\Delta(P_{\deg(m)-4})$. Thus, applying again the Mayer-Vietoris sequence, $\tilde{H}_i(\Delta(P_{\deg(m)-4}))$ may be nonzero only if $i=\deg(m)-5$ (depending on whether there are atoms $x\in (1,m)_{\neg 2}$ such that $\Delta(x,m)_{\neg 2}$ is a sphere), i.e. $\alpha((1,m)_{\neg 2})=0$.
\end{proof}

\begin{proof}[Proof of Lemma \ref{lem:NoPureSquare}]
If $m$ has degree $4$ there is nothing to prove, as both posets are empty.
Otherwise, $|\supp(m)|>2$ and hence there is an atom below $m$ which is not a square.
Let $m_{ns}$ be the join of all such atoms.
Let $P_0=(1,m)_{\neg 2}$ (it is not empty!) and for $i>0$ let $P_i$ be the restriction of  $(1,m)$ to $P_0$ union with the elements of degree at least $\deg(m)-i$ in $(1,m)$. Then $P_0\subseteq P_1\subseteq...\subseteq P_{\deg(m)-4}=(1,m)$.

Let $i>0$ and $x\in P_i\setminus P_{i-1}$. Then $$\lk(x,\Delta(P_i))=\Delta((x,m))*\Delta((1,x_{ns}]_{\neg 2})$$
where $\Delta((1,x_{ns}]_{\neg 2})=\emptyset$ iff $x_{ns}$ does not exist.
Note that as long as $\deg(x)>4$ then $x_{ns}$ exists, hence $\lk(x,\Delta(P_i))$ is acyclic.

Similar Mayer-Vietoris sequences to the ones using $x_{sf}$ in the proof of Theorem \ref{thm:eregI^2(ClawFree)}(Case 2) show that
$\Delta(P_i)$ is homologic to $\Delta(P_0)$ for $0\leq i\leq \deg(m)-5$.

Let $x\in P_{\deg(m)-4}\setminus P_{\deg(m)-5}$. Then $\lk(x,\Delta(P_{\deg(m)-4}))=\Delta((x,m))$, which is shellable, hence $\alpha((x,m))=0$.

Now add the vertices $\{x_1,...,x_j\}=P_{\deg(m)-4}-P_{\deg(m)-5}$ to $P_{\deg(m)-5}$ one by one, denoting by $P_{x_l}$ the induced poset in $(1,m)$ on $P_{\deg(m)-5}\cup \{x_1,...,x_l\}$, where $P_{x_0}:=P_{\deg(m)-5}$.
The Mayer-Vietoris sequence shows that the homology of $\Delta(P_{x_l})$ may differ from the homology of $\Delta(P_{x_{l-1}})$ only in the top dimension and in codimension $1$, where a difference in codimension $1$ is possible only if the codimension $1$ homology group of $\Delta(P_{x_{l-1}})$ is nonzero. Inductively, this implies that $\alpha(P_{\deg(m)-4})\le \alpha(P_0)$, i.e. that
$\alpha((1,m)) \leq \alpha((1,m)_{\neg 2})$.
\end{proof}
We remark that if $m_{ns}<m$ then the proof above gives that $\alpha((1,m))=0$ as $(1,m)_{\neg 2}$ is a cone (with apex $m_{ns}$).

Theorem \ref{thm:lin.res.clow.square} readily follows from Theorems \ref{thm:regClawFree} and \ref{thm:eregI^2(ClawFree)}.


\begin{acknowledgement}
Many thanks to Irena Peeva for many helpful discussions.
\end{acknowledgement}
\bibliographystyle{amsplain}
\bibliography{biblio,ubt,topology}

\end{document}